\def\eps{\varepsilon}
\newcommand{\mF}{\mathcal{F}}
\newcommand{\mM}{\mathcal{M}}
\newcommand{\be}{\begin{equation}}
\newcommand{\ee}{\end{equation}}
\newcommand{\bes}{\begin{equation*}}
\newcommand{\ees}{\end{equation*}}
\newcommand{\bs}{\begin{split}}
\newcommand{\es}{\end{split}}
\newcommand{\bss}{\begin{split*}}
\newcommand{\ess}{\end{split*}}
\newcommand{\bmat}{\left[ \begin{matrix}}
\newcommand{\emat}{\end{matrix} \right]}
\newcommand{\bsmat}{\left[ \begin{smallmatrix}}
\newcommand{\esmat}{\end{smallmatrix} \right]}
\newcommand{\bml}{\begin{multline}}
\newcommand{\eml}{\end{multline}}
\newcommand{\bmls}{\begin{multline*}}
\newcommand{\emls}{\end{multline*}}
\theoremstyle{plain}
\newtheorem{thm}{Theorem}
\newtheorem{cor}[thm]{Corollary}
\newtheorem{lemma}[thm]{Lemma}
\newtheorem{fact}[thm]{Fact}
\theoremstyle{definition}
\newtheorem{definition}[thm]{Definition}
\newtheorem{note}[thm]{Note}
\numberwithin{thm}{section}
\numberwithin{equation}{section}
\title{Toroidal Embeddings of non-Intrinsically-Linked Graphs}
\author{Nathan Hall}
\date{November 2023}
\newcommand{\bbR}{\mathbb{R}}
\begin{document}

\maketitle

\setlength{\parindent}{0in}

\begin{abstract}
    If a graph $G$ can be embedded on the torus, and be embedded linklessly in $\bbR^3$, it's not known whether or not we can always find a linkless embedding of $G$ contained in the standard (unknotted) torus; We show that, for orders 9 and below, any graph which is both embeddable on the torus, and linklessly in $\bbR^3$, can be embedded linklessly in the standard torus.
\end{abstract}

\section{Background and Motivation}

A graph is said to be \textit{planar} if it can be embedded in $\bbR^2$, and \textit{toroidal} if it can be embedded into the torus. It's well-known that every planar graph is toroidal. If a graph $G$ can be embedded into $\bbR^3$ such that no pair of cycles in $G$ form a nontrivial link, we say that $G$ is \textit{nIL}, or non-intrinsically-linked. This is in contrast to graphs which are IL, or intrinsically linked, whose embeddings into $\bbR^3$ always contain a nontrivial link.

If we consider its standard (unknotted) embedding, we can think about the torus as a subset of $\bbR^3$. If a graph $G$ is both toroidal and nIL, we know there exists an embedding of $G$ into $\bbR^3$ such that it is entirely contained within the torus. We also know that there exists an embedding of $G$ into $\bbR^3$ which is not linked; but is there an embedding which is both unlinked \textit{and} restricted to the torus? Or does something about the geometry of the torus force a link to arise in certain graphs? We begin to answer this question by studying links in the torus.



\subsection{Links in the Torus}


We ask the reader to recall the \textit{linking number} of a link:

\begin{definition}
    Let $K$, $L$ be (disjoint) links. Consider a projection of $K \cup L$ into the $xy$-plane, and consider the points at which $K$ and $L$ cross. Assign a value of $+1$ or $-1$ to each crossing according to the following diagram: 

    \begin{figure}[h]
        \centering
        \includegraphics[scale=0.2]{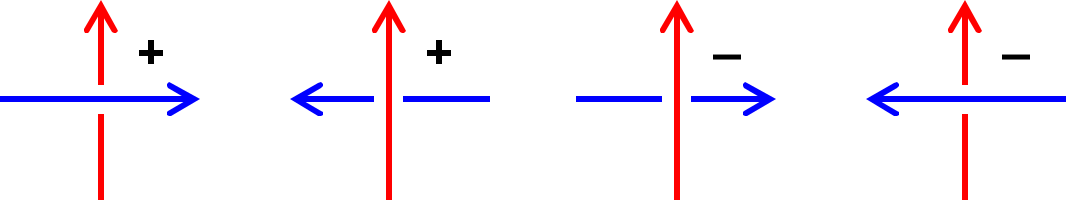}
        \label{crossdiagram1}
    \end{figure}

    Crossings which are assigned a value of $+1$ are called \textit{right-handed}, and crossings with a value of $-1$ are called \textit{left-handed}. The \textit{linking number} of $K\cup L$ is defined to be the sum of the values assigned to each crossing, divided by 2.
\end{definition}

This construction would not be half as useful as it is without the following theorem:

\begin{thm}
    The linking number is preserved under (ambient) isotopy; that is, if two links are equivalent, they have the same linking number.
\end{thm}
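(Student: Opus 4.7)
The plan is to reduce the question to a combinatorial invariance statement about link diagrams via Reidemeister's theorem, which says that two link diagrams represent ambient-isotopic links if and only if they differ by a finite sequence of planar isotopies and the three Reidemeister moves R1, R2, R3. Assuming this, it suffices to verify that the linking number, computed from any fixed planar projection of $K \cup L$, is unchanged under each Reidemeister move and under planar isotopy. Planar isotopy is essentially free, since it neither creates nor destroys crossings, nor changes any crossing's sign.

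The move-by-move verification is where the actual content lies, and I would handle the three moves in order. First, R1 adds or removes a kink involving a single strand, so the new or deleted crossing is a self-crossing of one component; such crossings do not contribute to the linking number of $K$ with $L$ at all, and so both sides are unaffected. Second, R2 creates or removes a pair of adjacent crossings between two strands. If the two strands belong to the same component, the crossings are again self-crossings and contribute nothing. If they belong to different components, one checks directly from the sign convention shown in the diagram that the two crossings have opposite signs, so their contributions cancel and the sum is unchanged. Third, R3 slides one strand across a crossing formed by two others; one tracks the three pairwise crossings before and after the move and observes that their signs and over/under pattern are preserved, so each contributes the same amount before and after.

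Once invariance under each individual move is established, the theorem follows by induction on the length of a Reidemeister sequence witnessing the equivalence of two diagrams, together with invariance under planar isotopy. The main obstacle, and the one step I would write out most carefully, is the R2 case when the two strands are from different components: one has to set up a convention for the orientations on $K$ and $L$ and then go through the two sub-configurations (the new bigon can be oriented in two ways) to confirm that in each case the two crossing signs are genuinely opposite. The R3 check, while slightly tedious, is essentially mechanical once one fixes orientations and enumerates the six crossings involved.

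An alternative plan would be to bypass Reidemeister moves entirely by defining the linking number via the Gauss integral $\frac{1}{4\pi}\int_{K\times L}\frac{(x-y)\cdot(dx\times dy)}{\|x-y\|^3}$, which is manifestly invariant under ambient isotopy by a homotopy argument in the complement, and then checking that this integral agrees with the combinatorial crossing-sign definition. I would mention this as a remark but not pursue it, since the Reidemeister approach is self-contained given the crossing-sign definition already adopted in the paper.
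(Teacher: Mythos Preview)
Your argument via Reidemeister's theorem is correct and is the standard textbook proof of this classical fact. However, the paper does not actually prove this theorem: it is stated as background without any proof, presumably because it is a well-known result the reader is assumed to accept. So there is nothing to compare your approach against.

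A few small remarks on your write-up. Your treatment of R1 and R2 is exactly right; the key observation that only inter-component crossings count, and that the R2 bigon always produces one $+1$ and one $-1$ regardless of which orientation sub-case you are in, is the heart of the matter. For R3 you say ``enumerates the six crossings involved''; strictly speaking there are three crossings before the move and three after, and the check is that each of the three crossings survives with the same sign and the same pair of components, so the total contribution is identical. Your Gauss-integral alternative is also a valid route and would be worth a sentence if you wanted to gesture at why the invariant is geometrically natural, but as you note it is unnecessary here.
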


\begin{cor}
    If a link has nonzero linking number, it is nontrivial.
\end{cor}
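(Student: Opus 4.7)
The plan is to prove this as an immediate consequence of the preceding theorem, via contrapositive. Suppose $K \cup L$ is a trivial link; I will show its linking number must be zero, which gives the contrapositive of the corollary.

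First, I would fix what is meant by \emph{trivial link}: two disjoint components each bounding disjoint embedded disks in $\bbR^3$, equivalently a link ambient isotopic to two unknotted circles lying in disjoint balls. The key observation is that such a configuration admits a projection into the $xy$-plane in which $K$ and $L$ have no crossings with each other at all. For instance, translate one component far along the $x$-axis so that the projections of $K$ and $L$ onto the $xy$-plane are disjoint; then every crossing in the projection is a self-crossing of $K$ or of $L$, contributing nothing to the linking number (which counts only $K$-$L$ crossings). Hence the trivial link has linking number $0$.

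Now I invoke the preceding theorem: the linking number is invariant under ambient isotopy. Therefore any link ambient isotopic to the trivial link also has linking number $0$. Contrapositively, if a link has nonzero linking number it cannot be ambient isotopic to the trivial link, and so is nontrivial.

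The only subtlety is that the definition of linking number given in the excerpt refers to ``a projection,'' so strictly speaking one needs to know the quantity is well-defined (independent of the chosen projection); but this is presumably established alongside the invariance theorem just stated, so I would simply cite it. The argument itself is essentially a one-liner given the theorem, so the ``main obstacle'' is really only the bookkeeping of writing down a projection in which the two components are visibly unlinked.
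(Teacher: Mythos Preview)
Your argument is correct and is exactly the reasoning the paper intends: the corollary is stated without proof, as an immediate consequence of the invariance theorem, and your contrapositive---computing the linking number of the trivial link from a projection with no $K$--$L$ crossings---is the standard way to fill in that one line. There is nothing to add.
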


The main relevance of the linking number for us is its connection to a similar construction which classifies knots in the torus: the \textit{slope} of a knot, defined below. The background showing the soundness of this definition is laid out, among other places, in \cite{Rolfsen_torus}.

\begin{definition}\label{def:slope}
    Let $K$ be a knot in the torus, letting $\ell$ denote the outer longitude and $m$ some meridian, both with some (arbitrary) orientation. Assume $K$ intersects $\ell$ and $m$ only transversally, and finitely many times. Using the convention that $K$ passes \textit{over} $\ell$ and $m$, assign to each crossing $+1$ if the crossing is right-handed and $-1$ if it is left-handed. Let $P$ be the algebraic sum of crossings of $K$ over $\ell$, and $Q$ the algebraic sum of crossings $K$ over $m$. If $P=Q=0$, then $K$ is said to be \textit{inessential}; otherwise we will say it has \textit{slope} $\frac{P}{Q}$.
\end{definition}

In the above definition, we may always assume that $P$ and $Q$ are coprime unless $K$ is inessential.
When dealing with a torus diagram, the slope of a knot is much easier to compute than the linking number, and as we will see the concepts are very much related.

\begin{thm}\label{thm:inessentialTrivial}
    Let $J,K$ be knots in the torus. If $J$ is inessential, then $J\cup K$ is trivial.
\end{thm}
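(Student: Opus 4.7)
The plan is to reduce to the fact that a null-homologous simple closed curve on the torus bounds a disk on the torus, and then to split into two cases based on which side of that disk $K$ lies.

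First, I would show that $J$ bounds a disk $D$ in the torus $T$. Since $J$ is inessential, the algebraic intersection numbers $P$ and $Q$ of Definition~\ref{def:slope} both vanish, which is equivalent to $[J] = 0$ in $H_1(T;\bbZ) \cong \bbZ[\ell] \oplus \bbZ[m]$. On an orientable surface, any null-homologous simple closed curve is separating, so $T \setminus J$ has two components whose closures $A$ and $B$ are compact orientable surfaces each with a single boundary circle. Since a compact orientable surface of genus $g$ with one boundary component has Euler characteristic $1-2g$, the identity $\chi(A)+\chi(B) = \chi(T) = 0$ forces $\{g_A,g_B\}=\{0,1\}$, so one piece (say $A$) is a disk $D$ with $\partial D = J$.

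Next, I would case-split on where $K$ sits. Because $K$ is disjoint from $J$ and lies on $T$, it sits entirely in one of the two components of $T \setminus J$. In the first case $K \subset T \setminus D$, so $D \cap K = \emptyset$ and $J$ bounds the embedded disk $D \subset \bbR^3 \setminus K$. In the second case $K \subset \tu{int}(D)$; applying the Schoenflies theorem inside $D$, the curve $K$ divides $D$ into a subdisk $D_K$ (with $\partial D_K = K$) and an annulus between $K$ and $J$, so $K$ bounds the disk $D_K \subset D \setminus J \subset \bbR^3 \setminus J$.

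In either case, one component of $J \cup K$ bounds an embedded disk in $\bbR^3$ disjoint from the other component. That disk can then be used to isotope the relevant component across it into a small ball disjoint from the other component, exhibiting $J \cup K$ as a split (hence trivial) link. The main obstacle is the first step -- showing that an inessential simple closed curve on $T$ bounds a disk on $T$ -- but this is a standard surface-topology computation, as sketched above; once that disk is in hand, the two-case splitting argument is elementary.
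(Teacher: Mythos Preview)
Your argument is correct. It differs from the paper's proof in both the key lemma invoked and the structure. The paper cites Rolfsen for the fact that an inessential curve on $T$ is null-homotopic, then picks a point $p\in T$ at distance $\eps$ from $K$, isotopes $J$ to a curve $L$ contained in the $\tfrac{\eps}{2}$-ball about $p$, and observes that the $2$-sphere of radius $\tfrac{\eps}{2}$ about $p$ misses both $L$ and $K$, so $L\cup K$ is split. You instead pin down the bounding disk $D\subset T$ explicitly via the Euler-characteristic decomposition $\chi(A)+\chi(B)=0$, and then do a two-case split on whether $K$ lies inside or outside $D$, in each case exhibiting a disk bounded by one component and disjoint from the other.

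What each approach buys: the paper's is shorter, but it is a little informal about why the isotopy shrinking $J$ toward $p$ can be taken to avoid $K$; if $K$ happens to sit inside the disk that $J$ bounds on $T$, shrinking $J$ across that disk on the torus would pass through $K$. Your case split is precisely what handles this, with the Schoenflies step in Case~2 producing a subdisk $D_K$ bounded by $K$ and missing $J$. So your route is slightly longer but makes the splitting disk explicit in both configurations, whereas the paper's route trades that explicitness for brevity. Both end at the same place: one component bounds an embedded disk in $\bbR^3$ disjoint from the other, so the link is split.
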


\begin{proof}
    Since $J$ is inessential, it is null-homotopic according to \cite{Rolfsen_torus}; pick a point $p$ on the torus some (Euclidean) distance $\eps$ away from $K$ and isotop $J$ to a knot $L$ which is within $\frac{\eps}{2}$ of $p$. A 2-sphere of radius $\frac{\eps}{2}$ centered at $p$ intersects neither $L$ nor $K$, so $L\cup K$ is split, thus $J\cup K$ is trivial.
\end{proof}

\begin{thm}\cite{Rolfsen_torus}\label{thm:cycleSameSlope}
    Let $J,K$ be knots in the torus with well-defined slopes $j,k$ respectively. If $J$ and $K$ are disjoint, then $j=k$.
\end{thm}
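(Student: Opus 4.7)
The plan is to translate the problem into a statement about $H_1(T^2;\bbZ)$. With $\ell$ and $m$ oriented as in Definition~\ref{def:slope}, the classes $[\ell],[m]$ form a basis of $H_1(T^2;\bbZ)\cong\bbZ^2$, and the intersection pairing is symplectic with $[\ell]\cdot[m]=\pm 1$. My first task is to check that the handedness counts $P,Q$ of Definition~\ref{def:slope} compute, up to a common sign, the algebraic intersection numbers of the knot with $\ell$ and with $m$ on the torus. Granting this, if we write $[J]=a_J[m]+b_J[\ell]$ and $[K]=a_K[m]+b_K[\ell]$, the slopes become $j=-a_J/b_J$ and $k=-a_K/b_K$, with the sign conventions cancelling in each ratio.

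With the translation in hand, the theorem is essentially a one-line calculation. Because $J$ and $K$ are disjoint simple closed curves on $T^2$, their algebraic intersection number vanishes, so
$$[J]\cdot[K] \;=\; \pm(a_J b_K - a_K b_J) \;=\; 0.$$
Both slopes being well-defined means that $J$ and $K$ are essential, which for simple closed curves on the torus is equivalent (via $\pi_1(T^2)=H_1(T^2)=\bbZ^2$) to having nonzero homology class. The equation $a_J b_K = a_K b_J$, combined with $(a_J,b_J)\neq 0$ and $(a_K,b_K)\neq 0$, forces $(a_K,b_K)$ to be a rational multiple of $(a_J,b_J)$, so $a_J/b_J = a_K/b_K$ and hence $j=k$.

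The main obstacle is the preparatory first step: reconciling the projection-and-crossing definition of slope, which secretly takes place in $\bbR^3$, with the purely topological intersection pairing on $T^2$. The cleanest route I see is to perturb $J$ slightly off the torus along the outward normal so that each transverse intersection of $J$ with $\ell$ on $T^2$ appears in the $xy$-projection as a genuine over-crossing, and then to verify by a local-frame computation at a single intersection point that a right-handed crossing corresponds to a positively oriented transverse intersection with respect to the induced orientation of the torus (and similarly for $m$). Once this sign bookkeeping is complete, the homological argument above closes the proof; the same bookkeeping also confirms the remark after Definition~\ref{def:slope} that $P$ and $Q$ may be taken coprime whenever $K$ is essential.
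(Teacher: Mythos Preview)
The paper does not supply its own proof of this theorem; it is quoted from \cite{Rolfsen_torus} without argument. Your homological proof is correct and is the standard one: disjoint simple closed curves on $T^2$ have algebraic intersection number zero, and for two nonzero classes in $H_1(T^2;\bbZ)\cong\bbZ^2$ the vanishing of the $2\times 2$ determinant forces proportionality, hence equal slopes.

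One small simplification: since Definition~\ref{def:slope} already stipulates that the knot meets $\ell$ and $m$ transversally on the torus and merely \emph{declares} the over/under convention at those points, the signed crossing sum there is literally the signed count of transverse intersections on the surface, for whichever orientation of $T^2$ the projection induces. So the translation $P=\pm\,[K]\cdot[\ell]$, $Q=\pm\,[K]\cdot[m]$ can be read off directly without the normal-perturbation argument, though what you wrote is not wrong.
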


\begin{definition}
    Let $m,n$ be integers with $\gcd(m,n)=d$. The torus link $T(m,n)$ is defined to be $d$-many disjoint copies of a knot of slope $\frac{m}{n}$.
\end{definition}

\begin{thm}[\cite{Bush_French_Smith_2014}]\label{thm:torusLKnum}
    The linking number of the torus link $T(m,n)$ is $\frac{mn}{2}(1-\frac{1}{\gcd(m,n)})$.
\end{thm}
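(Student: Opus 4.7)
The plan is to reduce the computation to the pairwise linking number between two parallel copies of a single torus knot, and then sum over all pairs of components. Let $d = \gcd(m,n)$ and write $m = dp$, $n = dq$ with $\gcd(p,q) = 1$. By the definition just given, $T(m,n)$ is the disjoint union of $d$ parallel copies $K_1,\ldots,K_d$, each a knot of slope $p/q$ on the standardly embedded torus. Extending the linking-number definition to multi-component links in the obvious way (sum the signed over-crossings across all pairs of distinct components, then divide by $2$), we have
$$\mathrm{lk}(T(m,n))\;=\;\sum_{1 \le i < j \le d} \mathrm{lk}(K_i,K_j).$$
By the rotational symmetry of $d$ equally-spaced parallel curves on the standard torus, all $\binom{d}{2}$ pairwise linking numbers are equal to a common value $\lambda$, and the problem reduces to computing $\lambda$.

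I claim $\lambda = pq$, and this is the substantive step. The approach I would take is via intersection with a Seifert surface: split $S^3$ along the torus $T$ into solid tori $V_1$ and $V_2$, whose cores are the longitude and meridian respectively. A slope-$p/q$ curve on $T$ is homologous in $V_1$ to $p$ times the longitudinal core and in $V_2$ to $q$ times the meridional core. Push one parallel copy $K_j$ slightly off $T$ into $V_1$ and compute its signed intersection with a Seifert surface for $K_i$, built from $q$ meridional disks in $V_2$ (each disk met $q^{-1}\!\cdot q = 1$ time by $K_i$ after suitable tubing); since $K_j$ winds $p$ times around the longitudinal core of $V_1$, each of these $q$ disks contributes $p$ signed intersections, giving $\lambda = pq$. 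Equivalently, one can parametrize the standard embedding $(\alpha,\beta)\mapsto ((R+r\cos\beta)\cos\alpha,(R+r\cos\beta)\sin\alpha,r\sin\beta)$, write $K_i$ and $K_j$ as small angular shifts of one another, and tally signed crossings in the $xy$-projection directly. As sanity checks, $(p,q)=(1,1)$ gives the Hopf link with $\lambda = 1$, and $(p,q)=(2,3)$ recovers the known torus-framing value $6$ for the trefoil.

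Substituting back,
$$\mathrm{lk}(T(m,n))\;=\;\binom{d}{2}\cdot pq\;=\;\frac{d(d-1)}{2}\cdot\frac{mn}{d^2}\;=\;\frac{mn}{2}\left(1-\frac{1}{d}\right),$$
which is the claimed formula.

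The main obstacle is the verification that $\lambda = pq$. The difficulty is not conceptual but one of bookkeeping: pinning down the correct orientation conventions so that parallel copies on the outer torus contribute $+pq$ (rather than $-pq$ or $\pm qp$ from choosing the wrong solid-torus side), and making sure the homological intersection count is correctly reconstructed from the Seifert surface across all coprime pairs $(p,q)$. A secondary concern is justifying that the natural multi-component extension of Definition 1.1 coincides with the sum of pairwise linking numbers, though this follows immediately from the fact that the signed crossings in a planar projection split cleanly across component pairs.
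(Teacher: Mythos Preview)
The paper does not prove this theorem; it is quoted from \cite{Bush_French_Smith_2014} (with the typo noted immediately after the statement corrected). So there is no in-paper argument to compare against, and your proposal stands on its own.

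Your reduction is correct and standard: with $d=\gcd(m,n)$, $m=dp$, $n=dq$, the link $T(m,n)$ is $d$ parallel $(p,q)$-curves, the total linking number is $\binom{d}{2}\lambda$ by symmetry, and the arithmetic $\binom{d}{2}pq=\tfrac{mn}{2}(1-\tfrac1d)$ is fine. The only substantive point is $\lambda=pq$, which is true, but your Seifert-surface sketch is garbled: a Seifert surface for a $(p,q)$ torus knot is not ``$q$ meridional disks in $V_2$'' with some tubing, and the phrase ``each disk met $q^{-1}\!\cdot q=1$ time by $K_i$'' does not parse. The clean argument is purely homological and avoids Seifert surfaces entirely: push $K_j$ into the solid torus $V_1$, where it is homologous to $p$ times the core $c_1$; then $\mathrm{lk}(K_i,K_j)=p\cdot\mathrm{lk}(K_i,c_1)$. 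Now $K_i$ lies in $V_2=S^3\setminus\mathrm{int}\,V_1$, where it is homologous to $q$ times the core $c_2$, and $\mathrm{lk}(c_2,c_1)=1$ for the standard Heegaard splitting, giving $\lambda=pq$. I would replace your Seifert paragraph with this two-line computation; it also makes the orientation bookkeeping you flag at the end essentially automatic.
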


As pointed out in a MathStackExchange post\cite{Bargabbiati_LowranceAdam}, the formula for the linking number of a torus link stated in \cite{Bush_French_Smith_2014} has a typo; the formula above is the correct one, which agrees with the authors' computations.

\begin{thm}\label{thm:linkslopepair}
    Let $L_1$ and $L_2$ be disjoint knots in the torus. Then $L_1\cup L_2$ forms a link if and only if both knots have well-defined slope $\frac{n}{m}$, where $m,n\neq 0$.
\end{thm}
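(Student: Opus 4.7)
The plan is to prove each direction separately using Theorems \ref{thm:inessentialTrivial}, \ref{thm:cycleSameSlope}, and \ref{thm:torusLKnum} together with standard facts about simple closed curves on the torus.

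For the ``if'' direction, assume both knots have well-defined slopes. By Theorem \ref{thm:cycleSameSlope} the common slope is $n/m$, and by hypothesis $m, n \neq 0$; we may also take $\gcd(n, m) = 1$. Two disjoint essential simple closed curves of the same slope on the torus cobound an embedded annulus, so $L_1$ and $L_2$ are parallel copies of the standard $(n, m)$-torus knot, and ambient isotoping this annulus onto the standard one exhibits $L_1 \cup L_2$ as the torus link $T(2n, 2m)$. Theorem \ref{thm:torusLKnum} then gives total linking number
\[
\frac{(2n)(2m)}{2}\left(1 - \frac{1}{\gcd(2n, 2m)}\right) = \frac{(2n)(2m)}{2}\cdot \frac{1}{2} = nm,
\]
which for a two-component link equals the pairwise linking number of $L_1$ and $L_2$. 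Since $nm \neq 0$, the corollary that nonzero linking number implies nontriviality concludes the argument.

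For the ``only if'' direction I argue the contrapositive. If at least one of $L_1, L_2$ is inessential, then Theorem \ref{thm:inessentialTrivial} immediately yields triviality. Otherwise both have well-defined slopes, which agree by Theorem \ref{thm:cycleSameSlope}, and since the hypothesis fails the common slope is either $0$ or $\infty$; that is, both knots are longitudes or both are meridians. Passing to $S^3$, the torus bounds two solid tori $V_1, V_2$, and $L_1$ is a meridian of one of them, say $V_i$, so $L_1$ bounds a properly embedded disk $D \subset V_i$ with $\partial D = L_1$ and $\text{int}(D) \subset \text{int}(V_i)$. Since $L_2 \subset \partial V_i$ is disjoint from $L_1$, it is disjoint from $D$ as well, so a regular neighborhood of $D$ is a $3$-ball containing $L_1$ and disjoint from $L_2$, exhibiting the link as split. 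By the symmetric argument $L_2$ also bounds such a disk, so both components are unknotted and $L_1 \cup L_2$ is the trivial link (in $S^3$, hence in $\bbR^3$).

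The most delicate step is the slope-$0$ sub-case of the ``only if'' direction: an outer longitude does not obviously bound a disk inside $\bbR^3$, so one has to work in $S^3$ where the exterior solid torus supplies the required disk. Once this is in place (and one observes that triviality in $S^3$ is equivalent to triviality in $\bbR^3$), the remainder of the argument is routine assembly of the preceding theorems.
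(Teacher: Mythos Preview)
Your proof is correct and follows the same overall architecture as the paper's: the ``if'' direction via the linking-number formula for $T(2n,2m)$, and the contrapositive of the ``only if'' direction split into the inessential case (Theorem~\ref{thm:inessentialTrivial}) and the slope-$0$/slope-$\infty$ cases after invoking Theorem~\ref{thm:cycleSameSlope}.

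The one place you diverge is in justifying triviality for slopes $0$ and $\infty$. The paper simply exhibits each as a specific two-component link in a figure and declares it trivial by inspection; you instead argue intrinsically by passing to $S^3$, observing that a longitude or meridian bounds a disk in one of the two complementary solid tori, and using that disk to split the link. Your version is more self-contained and makes explicit the point (which you flag) that the longitude case genuinely requires the outer solid torus in $S^3$ rather than anything visible in $\bbR^3$. The paper's version is shorter but relies on the reader parsing the pictures; neither approach is deeper, but yours would survive removal of the figures.
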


\begin{proof}
    First suppose that $L_1$ and $L_2$ have well-defined slope $\frac{n}{m}$ with $m,n\neq 0$. (We will also assume $m,n$ are coprime.) Then $L_1\cup L_2$ is (ambient) isotopic to $T(2m,2n)$, which has a linking number of $\frac{(2m)(2n)}{2}\big(1-\frac{1}{\gcd(2m,2n)}\big)$ by \ref{thm:torusLKnum}. We have $\gcd(2m,2n)=2$, meaning the linking number is $mn\neq 0$, and thus $L_1\cup L_2$ is nontrivial. 

    Now suppose the negation of the above. If WLOG $L_1$ is inessential, then $L_1\cup L_2$ is trivial by \ref{thm:inessentialTrivial}. Otherwise, $L_1$ and $L_2$ must have the same slope by \ref{thm:cycleSameSlope}. If $L_1$ and $L_2$ have slope $0$, then $L_1 \cup L_2$ is isotopic to the first link in Figure \ref{fig:slopeCounterexamples}, which is trivial by inspection. Similarly, if $L_1, L_2$ have slope $\infty$, then $L_1 \cup L_2$ is isotopic to the second link in Figure \ref{fig:slopeCounterexamples}, also trivial. This fully proves the result.
\end{proof}

\begin{figure}[ht]
    \centering
    \includegraphics[width=0.7\linewidth]{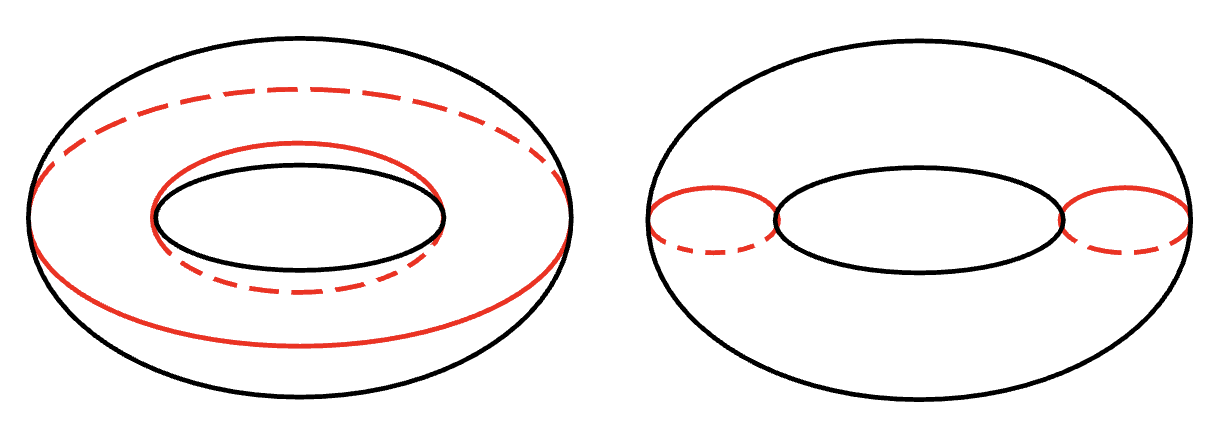}
    \caption{Two trivial links which arise from slopes 0 and $\infty$}
    \label{fig:slopeCounterexamples}
\end{figure}

The implications of this theorem for graphs in the torus will be discussed in more detail in Section 2.3; for now we turn to another essential part of our investigation.

\subsection{Minor-Closed Families}

A set or family of graphs $F$ is said to be \textit{minor-closed} if, for any $G\in F$, any minor of $G$ is a member of $F$. A graph $M$ is said to be a \textit{forbidden minor} for $F$ if it has the following property: if $M$ is a minor of $G$, then $G$ is not a member of $F$. The famous Robertson-Seymour Theorem states that the set of forbidden minors for any minor-closed family must be finite.

It's well-known that the family of nIL graphs is minor-closed, with the Petersen family of graphs forming the complete set of forbidden minors. It's also known that the set of toroidal graphs is minor-closed, but the complete set of forbidden minors is not known; more on this later.

We wish to know whether graphs which are both toroidal and nIL are embeddable linklessly on the torus, and it so happens that we can analyze these graphs as members of minor-closed families. From here on, if a graph is toroidal and nIL, we will say it is ``TN," and if a graph has a linkless embedding on the torus, we will say it is \textit{linklessly toroidally embeddable}, or ``LTE".

The reader will note that the family of LTE graphs is a subset of the family of TN graphs, as any graph which is LTE is both toroidal and nIL. The main question of this paper, as posed on the first page, is equivalent to asking whether the inclusion goes the opposite direction as well. 

It's easy to see that a the family of TN graphs is minor-closed, given that the families of toroidal graphs and nIL graphs are independently minor-closed. This, however, does not imply that the family of LTE graphs is minor-closed. This fact is not especially difficult to show; however, the proof of closure under edge contraction is annoyingly involved, and not necessary for our purposes, so we omit it here and only include closure under taking subgraphs.

\begin{thm}\label{thm:LTESubgraphClosed}
    Any subgraph of an LTE graph is LTE.
\end{thm}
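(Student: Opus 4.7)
The plan is to argue directly from the definition by restricting a linkless toroidal embedding of the larger graph. Specifically, suppose $G$ is LTE and $H$ is a subgraph of $G$, so $V(H) \subseteq V(G)$ and $E(H) \subseteq E(G)$. By hypothesis, fix an embedding $\iota \colon G \hookrightarrow T^2 \subset \bbR^3$ into the standard torus in which no pair of disjoint cycles of $G$ forms a nontrivial link. The natural candidate for an LTE embedding of $H$ is simply the restriction $\iota|_H$: keep the images of the vertices in $V(H)$ and the arcs representing the edges in $E(H)$, and delete the rest. Since $\iota$ was an embedding into $T^2$, so is $\iota|_H$, so we automatically have an embedding of $H$ on the torus.

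The only thing left to verify is that this restricted embedding is linkless. Here I would use the observation that every cycle $C$ in $H$ is, by definition, a cycle in $G$, since it uses only edges and vertices that $H$ inherits from $G$. Consequently, any pair of disjoint cycles $C_1, C_2$ in $\iota|_H(H)$ is also a pair of disjoint cycles in $\iota(G)$. By the LTE hypothesis on $G$, the link $C_1 \cup C_2$ is trivial. Thus no pair of cycles in the restricted embedding forms a nontrivial link, and $H$ is LTE.

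I do not expect any serious obstacle here; the result really is formal once one unpacks the definitions of \emph{subgraph} and \emph{LTE}. The only minor subtlety worth mentioning explicitly is that the notion ``linkless'' is defined by quantifying over cycle pairs of the embedded graph itself, so the argument reduces cleanly to the trivial containment of cycle sets $\{\text{cycles of }H\} \subseteq \{\text{cycles of }G\}$. This is precisely the reason the authors note that the analogous closure under edge contraction is more delicate: contraction can introduce new cycles (and potentially new links) that were not present in the original embedding, whereas taking subgraphs can only remove cycles.
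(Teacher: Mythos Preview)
Your argument is correct and matches the paper's own proof: restrict the given linkless toroidal embedding of $G$ to $H$ and observe that the result is still toroidal and linkless. If anything, you are more explicit than the paper in spelling out the cycle-containment reason that linklessness passes to the restriction.
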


\begin{proof}
    Let $G$ be an LTE graph with toroidal embedding $E$, and let $G'\subseteq G$. Then restricting the embedding $E$ to $G'$ results in an embedding whose image is a subset of $E$; since $E$ is linkless and toroidal, this new embedding is also linkless and toroidal, thus $G'$ is LTE.
\end{proof}

We will also make frequent use of maximal elements of a minor-closed family; by a maximal element, we mean a graph $G$ where for any $e$ in the edge set of its complement, adding $e$ to $G$ results in a graph which is not in the family.

\begin{fact}\label{lem:MaxlSubgraph}
    Let $\mF$ be a minor-closed family, and let $G\in \mF$ be a graph of order $n$. Then $G$ is a subgraph of a maximal member of $\mF$ of order $n$.
\end{fact}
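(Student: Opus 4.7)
The plan is to run a standard finite maximality argument on the set of supergraphs of $G$ of the same order that still lie in $\mF$. Specifically, I would define
\[
S = \{\, H \in \mF : V(H) = V(G),\ G \subseteq H \,\}.
\]
This set is nonempty because $G \in S$, and it is finite because every graph with vertex set $V(G)$ has at most $\binom{n}{2}$ edges, so there are only finitely many candidates. Among the finitely many elements of $S$, choose one, call it $H$, with the maximum number of edges.

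Next I would verify $H$ is maximal in $\mF$ in the sense defined just before the statement. Let $e$ be any edge in the complement of $H$ (viewed as a graph on the vertex set $V(G)$), and consider $H + e$. Then $H + e$ has vertex set $V(G)$, contains $G$ as a subgraph (since $G \subseteq H \subseteq H+e$), and has strictly more edges than $H$. If $H + e$ were in $\mF$, it would lie in $S$, contradicting the choice of $H$ as an edge-maximal element of $S$. Thus $H + e \notin \mF$, so $H$ is indeed maximal, and by construction $H$ has order $n$ and contains $G$ as a subgraph.

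Notably, the minor-closedness of $\mF$ is never actually invoked here; the argument only uses that $\mF$ contains $G$ and that we are working with a fixed finite vertex set. The only conceptual point worth flagging is a matter of convention: "maximal of order $n$" must be interpreted as maximal among graphs on the same $n$-vertex set (otherwise one could always add an isolated vertex). With that clarification the proof is essentially a one-line pigeonhole, so there is no real obstacle; the statement is packaged as a Fact precisely because it is a routine bookkeeping tool for the arguments to come.
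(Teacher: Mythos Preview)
Your argument is correct and matches the paper's own justification, which simply says to add edges to $G$ until no further edge can be added while remaining in $\mF$; your version is just the ``pick an edge-maximal element of $S$'' phrasing of that same process. Your side observation that minor-closedness is not actually used is also accurate.
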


The proof of this fact is trivial; just add edges to $G$ until either there are no edges left to add, or adding any edge results in a graph which is not in $\mF$.

If a graph is maximally nIL, we say it is ``maxnIL." If a graph is maximally TN, we say it is ``MTN."

\subsubsection{MaxnIL Graphs and Toroidal Obstructions}

The complete sets of maxnIL graphs of orders up to 11 are known through the work of \cite{naimi2022complement}. We concern ourselves only with maxnIL graphs of orders 6 through 9 for now; there is just one maxnIL graph of order 6 ($K_6-e$), two maxnIL graphs of order 7, six of order 8, and twenty of order 9.

It was mentioned earlier that the complete set of forbidden minors for toroidal graphs is not known. The work of Myrvold and Woodcock \cite{Myrvold_Woodcock_2018} is the most expansive work we know of attempting to classify this set; they have determined every toroidal forbidden minor of order 12 and below. The authors refer to these graphs as toroidal minor-order obstructions; we will refer to them as toroidal obstructions or simply as obstructions. This work leads to the following (critical) theorem:

\begin{thm}[corollary of \cite{Myrvold_Woodcock_2018}]\label{thm:ToroidalObstructions}
    Let $B_n$ be the set of obstructions of order $n$ given in \cite{Myrvold_Woodcock_2018}. A graph of order $n$, where $n\leq 12$, is toroidal if and only if it does not contain any graph in $B_k$ as a minor for $k\leq n$.
\end{thm}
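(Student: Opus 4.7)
The plan is to prove both directions separately, both reducing quickly to known facts: the minor-closed nature of toroidal graphs plus the completeness of the Myrvold--Woodcock classification through order $12$.

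For the forward direction, I would argue directly from what it means to be a forbidden minor. Every graph $M \in B_k$ is by construction a toroidal obstruction, which means $M$ is itself non-toroidal while every proper minor of $M$ is toroidal. Because toroidality is preserved under taking minors, if $G$ were a toroidal graph containing some $M \in B_k$ as a minor, then $M$ would inherit toroidality from $G$, contradicting the defining property of $M$. Hence any toroidal $G$ can contain no graph from any $B_k$ as a minor, regardless of the order restriction.

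For the backward direction, suppose for contradiction that $G$ has order $n \leq 12$ and is not toroidal. Since the family of toroidal graphs is minor-closed, by Robertson--Seymour (applied to this family, or equivalently by directly taking a $\subseteq$-minimal non-toroidal minor of $G$), there exists a toroidal obstruction $M$ which is a minor of $G$. Taking a minor can only decrease the number of vertices, so $|V(M)| \leq |V(G)| = n \leq 12$. Now I invoke the substantive content of \cite{Myrvold_Woodcock_2018}: the authors produce the complete list of toroidal obstructions on at most $12$ vertices, so the union $\bigcup_{k \leq 12} B_k$ contains every toroidal obstruction of order $\leq 12$. In particular $M \in B_k$ for some $k \leq n$, contradicting the hypothesis that $G$ avoids all such $B_k$-minors.

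The only real content here is importing the Myrvold--Woodcock classification correctly, so the main obstacle is not mathematical but bibliographic: I would need to verify that \cite{Myrvold_Woodcock_2018} actually claims completeness of the obstruction list for every order up to $12$ (as opposed to, say, a lower bound on the list), and that what the present paper calls $B_n$ matches their notion of minor-order obstruction. Assuming that is the case, the argument above is essentially a two-line bookkeeping proof, which is why the statement is labelled a corollary of the cited work rather than a theorem requiring independent argument.
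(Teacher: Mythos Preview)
Your argument is correct, and it is exactly the routine unpacking one would expect. Note, however, that the paper does not actually supply a proof for this statement: it is stated as a corollary of \cite{Myrvold_Woodcock_2018} and left without further justification. Your write-up simply makes explicit what the paper takes for granted, namely that minor-closure of toroidality handles the forward direction and that completeness of the Myrvold--Woodcock list through order $12$ handles the backward direction. Your closing caveat about verifying that the cited work genuinely asserts completeness (rather than merely exhibiting a list) is well placed and is the only substantive thing to check.
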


The smallest order of any obstruction is 8, of which there are exactly three: the first is isomorphic to $K_8 - K_3$, by which we mean $K_8$ excluding the edges of $K_3$. The other two are isomorphic to $K_8-(2K_2 \cup P_3)$ and $K_8 - K_{2,3}$; these three graphs have size 25, 24, and 22 respectively. 

All maxnIL graphs as well as toroidal obstructions of order 9 and below, all taken directly from the source material in \cite{naimi2022complement} and \cite{Myrvold_Woodcock_2018}, can be found in the attached Mathematica file \verb|ImportantGraphs.nb|.

\section{Method}

\subsection{Basis}

Our method relies on the following result:

\begin{thm}
    If every MTN graph of order $n$ is LTE, then every TN graph of order $n$ is LTE.
\end{thm}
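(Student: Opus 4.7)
The plan is to proceed by a direct reduction: take an arbitrary TN graph $G$ of order $n$, embed it as a subgraph of some MTN graph $H$ of the same order, and then use the hypothesis together with the fact that LTE is closed under taking subgraphs.

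First I would invoke Fact \ref{lem:MaxlSubgraph} on the minor-closed family of TN graphs (noted earlier in the paper as being minor-closed because it is the intersection of the toroidal graphs and the nIL graphs). Applied to our $G$, this produces a graph $H$ of order $n$ with $G \subseteq H$ such that $H$ is maximal in the TN family, i.e., $H$ is MTN. This step requires nothing beyond the statement of Fact \ref{lem:MaxlSubgraph}; one simply keeps adjoining edges to $G$ (on the same vertex set, so the order is preserved) until no further edge can be added without leaving the TN family.

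Next I would apply the hypothesis of the theorem: since $H$ is an MTN graph of order $n$, $H$ is LTE. Finally, because $G$ is a subgraph of $H$, Theorem \ref{thm:LTESubgraphClosed} immediately gives that $G$ is LTE as well, completing the proof.

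There is no real obstacle here; the statement is essentially a bookkeeping reduction whose purpose is to cut the verification problem down to a finite check on the (finitely many) maximal TN graphs of order $n$. The only subtlety worth flagging explicitly in the write-up is that Fact \ref{lem:MaxlSubgraph} preserves the order $n$ (we are enlarging only the edge set, not the vertex set), which is exactly what makes the hypothesis on MTN graphs of order $n$ applicable to $H$.
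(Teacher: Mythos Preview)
Your proof is correct and follows essentially the same approach as the paper: use Fact~\ref{lem:MaxlSubgraph} to pass from an arbitrary TN graph of order $n$ to an MTN supergraph of the same order, then apply the hypothesis and Theorem~\ref{thm:LTESubgraphClosed}. Your explicit remark that Fact~\ref{lem:MaxlSubgraph} preserves the order is a useful clarification but does not deviate from the paper's argument.
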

\begin{proof}
    If every MTN graph of order $n$ is LTE, then every subgraph of an MTN graph is LTE by \ref{thm:LTESubgraphClosed}.
    By \ref{lem:MaxlSubgraph}, every TN graph of order $n$ is a subgraph of some MTN graph of order $n$, giving the result.
\end{proof}

According to this theorem, finding linkless, toroidal embeddings of every MTN graph of order $n$ is sufficient to prove that every TN graph of order $n$ is LTE. This approach allows us to choose orders as large as we'd like, as long as we know the following: firstly, what is the set of MTN graphs of order $n$? Once we know which graphs are in this set, finding embeddings can be done by hand. Secondly, once we have an embedding of a graph, how can we determine whether the embedding is linkless? These two questions are answered in the next two sections.

\subsection{Finding MTN Graphs}

\subsubsection{Orders 8 and Below}

We begin finding MTN graphs by leveraging our existing knowledge of maxnIL graphs. Note that every toroidal, maxnIL graph is MTN, since the graph itself is TN and adding any edge results in a graph which is IL (i.e, not TN). For each order, then, the work lies in finding those MTN graphs which are not maxnIL, or proving that none exist. For the latter option, the following lemma is useful.

\begin{lemma}\label{lem:MTNmaxnIL}
    Suppose that every maxnIL graph of order $n$ is toroidal, and let $G$ be a graph of order $n$. Then $G$ is MTN if and only of $G$ is maxnIL.
\end{lemma}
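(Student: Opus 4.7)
The plan is to prove both directions by leveraging Fact \ref{lem:MaxlSubgraph} applied to the minor-closed family of nIL graphs, together with the hypothesis that every maxnIL graph of order $n$ is toroidal.

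For the backward direction ($G$ maxnIL implies $G$ MTN), the argument is almost immediate: by hypothesis $G$ is toroidal, and maxnIL gives nIL, so $G$ is TN. For any edge $e$ in the complement of $G$, maximality as nIL forces $G+e$ to be IL, so $G+e$ fails to be TN. Hence $G$ is MTN. I do not anticipate any difficulty here.

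For the forward direction ($G$ MTN implies $G$ maxnIL), assume $G$ is MTN and let $e$ be any edge in the complement. I want to show $G+e$ is IL. Since $G$ is MTN, $G+e$ is not TN, so either $G+e$ is non-toroidal or $G+e$ is IL. The case I need to rule out is that $G+e$ is nIL but non-toroidal. Here is where the hypothesis does the work: if $G+e$ were nIL, then by Fact \ref{lem:MaxlSubgraph} applied to the nIL family, $G+e$ would be a subgraph of some maxnIL graph $H$ of order $n$; by hypothesis $H$ is toroidal, and since toroidality is preserved under taking subgraphs (it is a minor-closed property), $G+e$ would be toroidal as well. Thus $G+e$ would be TN, contradicting MTN of $G$. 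Therefore $G+e$ is IL, proving $G$ is maxnIL.

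The main obstacle, such as it is, is spotting that Fact \ref{lem:MaxlSubgraph} is the right tool: without it, one cannot upgrade the bare hypothesis "every maxnIL of order $n$ is toroidal" to a statement about arbitrary nIL graphs of order $n$. Once that observation is in place, the only technical point is that subgraphs of toroidal graphs remain toroidal, which is part of the minor-closedness of the toroidal family noted earlier in Section 1.2. No calculation is required.
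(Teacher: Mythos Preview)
Your proof is correct and uses essentially the same approach as the paper: both directions rely on Fact~\ref{lem:MaxlSubgraph} and the hypothesis that maxnIL graphs of order $n$ are toroidal. The only cosmetic difference is that the paper applies Fact~\ref{lem:MaxlSubgraph} to $G$ itself (obtaining $G\subseteq M$ with $M$ maxnIL and hence TN, then deriving a contradiction from $G\subsetneq M$), whereas you apply it to $G+e$; the logical content is the same.
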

\begin{proof}
    The reverse direction is proven by the note above, so we need only show the forward direction. Let $G$ be an MTN graph of order $n$. Then $G$ is nIL, and so must be a subgraph of some maxnIL graph $M$ of order $n$ by \ref{lem:MaxlSubgraph}. Note that $M$ is TN by assumption. Now suppose toward contradiction that $G$ is a proper subgraph of $M$. Then there exists an edge $e$ such that $G+e\subseteq M$, implying $G+e$ is TN. This contradicts that $G$ is MTN; therefore $G=M$, i.e. $G$ is maxnIL.
\end{proof}

The above lemma equivalently says that the set of maxnIL graphs of order $n$ is also the complete set of MTN graphs for that order, as long as all of the maxnIL graphs are toroidal. We can use this result to immediately prove our main result for a few small orders.

We start at order 6, since no graph of order 5 or below can be IL. There is one maxnIL graph of order 6 ($K_6-e$) and two maxnIL graphs of degree 7. Recall that the smallest toroidal obstruction is of order 8; as such we can immediately say that all three of these graphs are toroidal, and by \ref{lem:MTNmaxnIL}, they are MTN as well.
Furthermore, note that since $K_5$ is a subgraph of $K_6-e$, every graph of order 5 and below is also a subgraph. Handily, this means that proving $K_6-e$ is LTE implies that the result holds for every graph of order 5 and below.

For order 8, we can also use \ref{lem:MTNmaxnIL} to our advantage. As mentioned, the smallest toroidal obstruction is $K_8-K_{2,3}$, which has 8 vertices and 22 edges. There are 6 maxnIL graphs of order 8; one has size 21 and the rest have size 22, as can be seen in \verb|ImportantGraphs.nb|. Thus the only way any of these graphs can be non-toroidal is if they are isomorphic to $K_8-K_{2,3}$. However, this cannot be the case, as $K_8-K_{2,3}$ is IL; it has $K_6$ as a subgraph, as can be seen in Figure \ref{fig:DH3IL}. Thus, the six maxnIL graphs form the complete set of MTN graphs for order 8.

\begin{figure}[ht]
    \centering
    \includegraphics[width=0.5\linewidth]{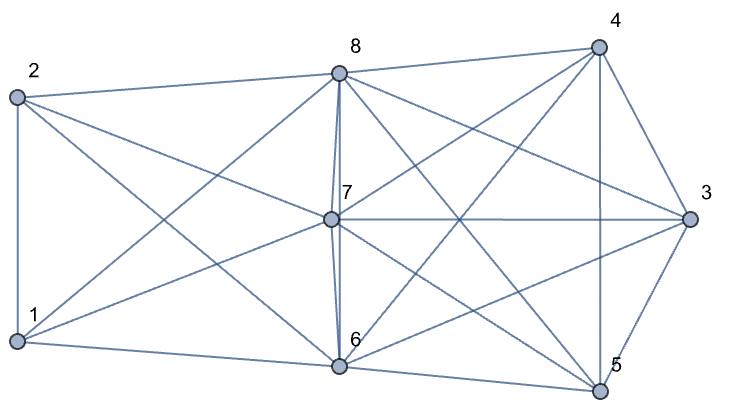}
    \centering
    \caption{$K_8-K_{2,3}$. Partite sets for the removed edges are \{1,2\} and \{3,4,5\} respectively; deleting vertices 1 and 2 yields $K_6$.}
    \label{fig:DH3IL}
\end{figure}

\subsubsection{Order 9}

Order 9 is the smallest order for which some of the maxnIL graphs are non-toroidal, so Theorem \ref{lem:MTNmaxnIL} is no longer of use. We know that every toroidal maxnIL graph is MTN, but in order to find the complete set of MTN graphs we must also find those MTN graphs which are not maxnIL. Throughout this section, we will use the label $\mM$ for the set of all order-9 MTN graphs which are not maxnIL. We will determine $\mM$ by using a recursive search algorithm, but we begin by narrowing the search space with lemmata and smaller pieces of computation.

\begin{note}
    Recall Theorem \ref{thm:ToroidalObstructions}: a graph is non-toroidal if and only if it contains an obstruction as a minor. If a graph $G$ of order 9 is toroidal, the only obstructions it could contain are those of order 8 and 9; but any order-9 obstruction it contains would have to be strictly as a subgraph.
\end{note}

We use this fact in the following two lemmata, whose proofs can be found in the accompanying Mathematica file \verb|Sec_2.2.2|.

\begin{lemma}\label{lem:mnILnonTors}
    Of the twenty maxnIL graphs of order 9, only four are non-toroidal.
    \begin{figure}[ht]
        \centering
        \includegraphics[width=0.22\linewidth]{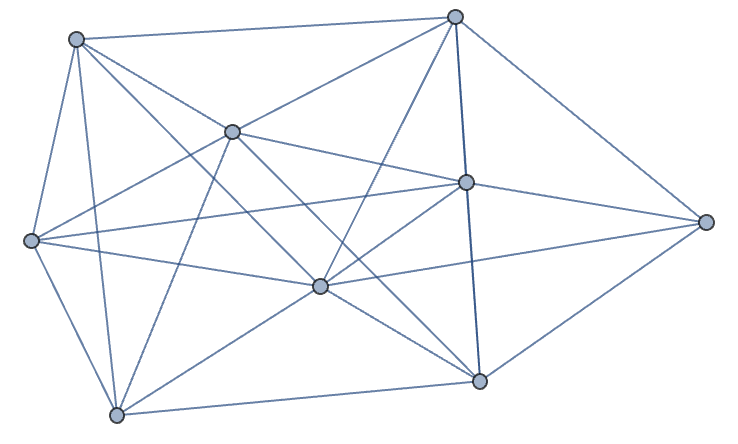}
        \includegraphics[width=0.22\linewidth]{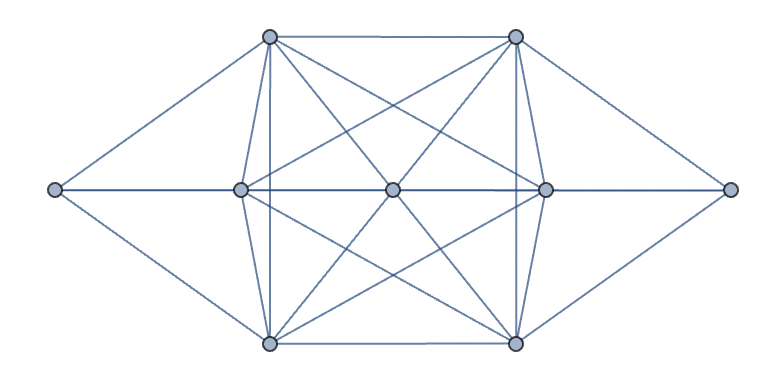}
        \includegraphics[width=0.22\linewidth]{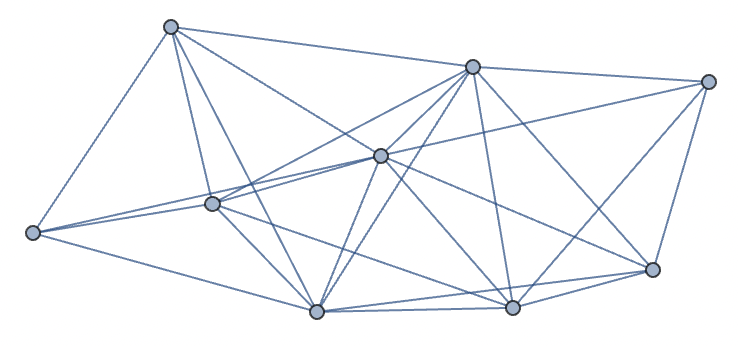}
        \includegraphics[width=0.22\linewidth]{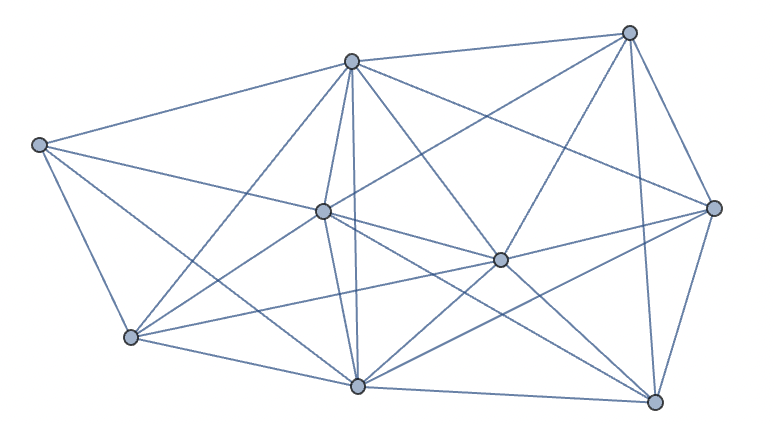}
        \caption{The four non-toroidal maxnIL graphs of order 9.}
        \label{fig:mnILNT}
    \end{figure}
\end{lemma}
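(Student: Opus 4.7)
The approach is to apply Theorem~\ref{thm:ToroidalObstructions} directly: a graph of order~$9$ is non-toroidal if and only if it contains, as a minor, some obstruction of order~$8$ or~$9$. So I would test each of the twenty maxnIL graphs of order~$9$ against each relevant obstruction.

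First I would collect the obstructions into two lists: the three order-$8$ obstructions $K_8-K_3$, $K_8-(2K_2\cup P_3)$, and $K_8-K_{2,3}$ from the paper, together with the complete list of order-$9$ obstructions from \cite{Myrvold_Woodcock_2018}. Using the Note preceding the lemma, any order-$9$ obstruction appearing as a minor in an order-$9$ graph must in fact appear as a spanning subgraph (since the only minor operations that preserve vertex count are edge deletions). So for the order-$9$ obstructions the minor test reduces to a routine subgraph-isomorphism check. For the order-$8$ obstructions, I would iterate over the $9 + |E(G)|$ ways to obtain an $8$-vertex minor of a given order-$9$ maxnIL graph $G$ --- namely, one single-vertex deletion or one single-edge contraction --- and then check each resulting $8$-vertex graph against the three order-$8$ obstructions via subgraph-isomorphism (since any further edge deletions correspond to passing to a subgraph).

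The bulk of the work is then carried out in Mathematica in the accompanying file \verb|Sec_2.2.2|: for each of the twenty maxnIL graphs, the computation records whether any obstruction appears as a minor. The expected result is that exactly four graphs --- those shown in Figure~\ref{fig:mnILNT} --- admit an obstruction as a minor, while the remaining sixteen admit none and are therefore toroidal. The main obstacle is not conceptual but organizational: ensuring the enumeration of $8$-vertex minors is exhaustive and that the isomorphism tests against the obstructions are applied correctly to each of them. Because the search space is small (graphs of order $9$ and size at most $\binom{9}{2}=36$), the computation is tractable, and for each of the four exceptional graphs one can extract an explicit witness --- an obstruction together with the single vertex-deletion or edge-contraction that produces it --- while for each of the sixteen toroidal graphs the absence of any such witness in the (finite) list of $8$-vertex minors certifies toroidality.
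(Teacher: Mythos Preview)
Your proposal is correct and matches the paper's approach: the paper likewise defers the proof to the Mathematica file \texttt{Sec\_2.2.2}, invoking Theorem~\ref{thm:ToroidalObstructions} together with the Note that order-$9$ obstructions in an order-$9$ graph must occur as spanning subgraphs. Your writeup simply spells out in more detail how the order-$8$ minor checks are organized (enumerating the $9+|E(G)|$ single vertex-deletions and edge-contractions and then testing subgraph-containment), which is exactly the kind of exhaustive search the paper's code performs.
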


It will be useful to know exactly which obstructions are contained in these four graphs. From here on, let $S$ denote the set of all obstructions contained in some non-toroidal maxnIL graph.

\begin{lemma}\label{lem:mnILnonTorObsts}
    The set $S$ consists of the following five graphs:
    \begin{figure}[ht]
        \centering
        \includegraphics[width=0.3\linewidth]{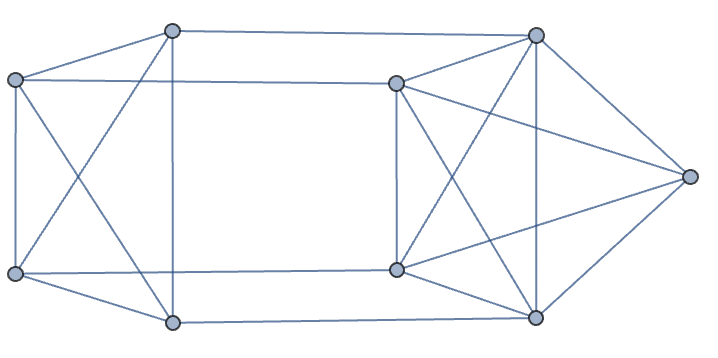}
        \includegraphics[width=0.3\linewidth]{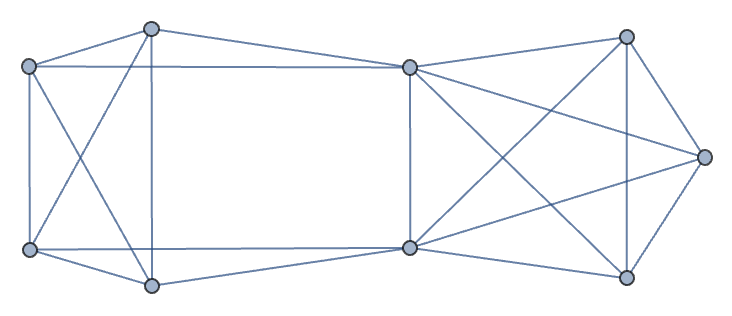}
        \includegraphics[width=0.3\linewidth]{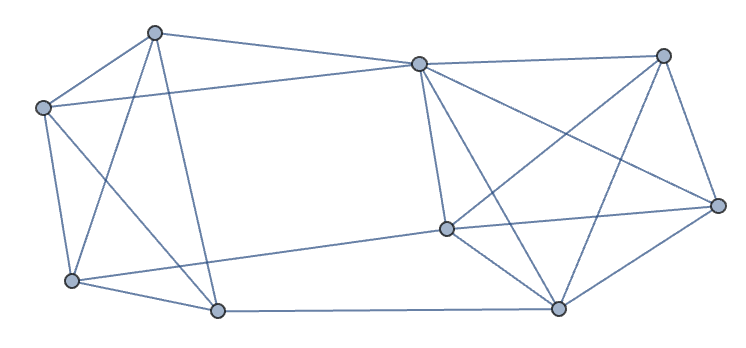}
        \includegraphics[width=0.3\linewidth]{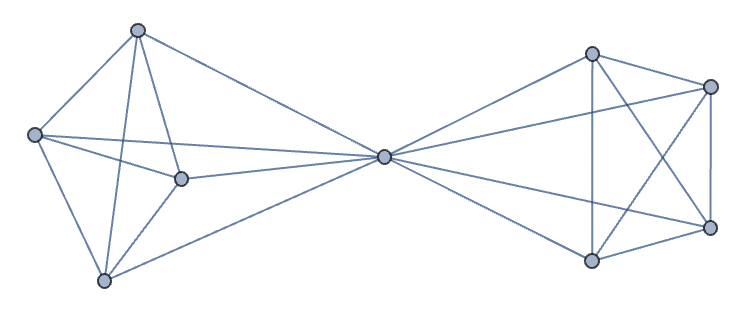}
        \includegraphics[width=0.3\linewidth]{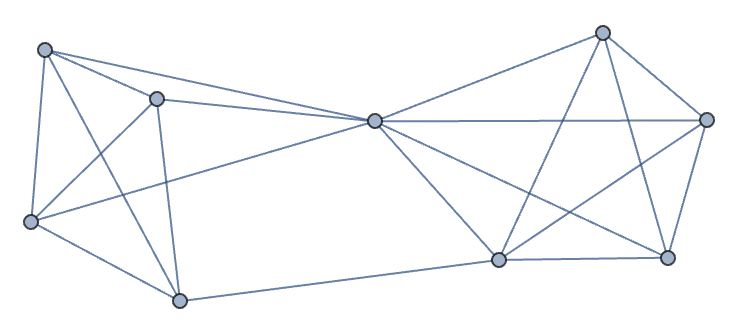}
        \label{fig:torObst}
    \end{figure}
\end{lemma}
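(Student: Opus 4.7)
The plan is to reduce the claim to a finite case analysis, implemented computationally in the accompanying Mathematica file \verb|Sec_2.2.2|. By definition, $S$ is the union, over the four non-toroidal maxnIL graphs $G_1,\dots,G_4$ identified in Lemma \ref{lem:mnILnonTors}, of the set of toroidal obstructions that arise as minors of each $G_i$. Since each $G_i$ has order $9$ and a minor has at most as many vertices as its parent, only obstructions of orders $\leq 9$ can arise; Theorem \ref{thm:ToroidalObstructions} combined with the tabulation in \cite{Myrvold_Woodcock_2018} supplies an explicit finite candidate list.

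First I would fix that candidate list: the three order-$8$ obstructions ($K_8-K_3$, $K_8-(2K_2 \cup P_3)$, and $K_8-K_{2,3}$) together with all obstructions of order $9$ from \cite{Myrvold_Woodcock_2018}. Second, for each pair $(G_i, O)$ with $O$ a candidate, I would run a minor-containment test by enumerating branch-set partitions of $V(G_i)$ into $|V(O)|$ connected pieces (which exhausts all ways of contracting edges), together with edge-subset choices after the contraction, and then checking whether any contracted graph contains $O$ as a subgraph via a graph-isomorphism call. Third, I would take the union of the obstructions that test positive for at least one $G_i$ and verify that it matches the five graphs depicted in Figure \ref{fig:torObst}.

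The main obstacle is purely combinatorial bookkeeping rather than mathematical depth. Minor-containment is NP-hard in general, but for parent graphs on $9$ vertices and pattern graphs on $8$ or $9$ vertices the search space of branch-set partitions is small enough that brute-force enumeration terminates in seconds; this is what is automated in \verb|Sec_2.2.2|. Two sanity checks guide the implementation: (i) since each $G_i$ is non-toroidal, at least one candidate obstruction must be detected for each $G_i$, and (ii) isomorphism testing must be applied to the detected minors themselves to ensure that the final count of \emph{distinct} obstructions is accurate. The non-trivial content of the lemma is that, after this de-duplication, exactly five obstructions appear across the four parents; everything else is a mechanical verification.
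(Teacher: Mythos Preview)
Your approach matches the paper's: the lemma is established by a finite computational check in the accompanying Mathematica file \verb|Sec_2.2.2|, and your plan---restrict to obstructions of order at most $9$ via Theorem~\ref{thm:ToroidalObstructions}, test each candidate against the four graphs of Lemma~\ref{lem:mnILnonTors}, then de-duplicate by isomorphism---is the same strategy.

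One small gap in your description of the minor test: enumerating ``branch-set partitions of $V(G_i)$ into $|V(O)|$ connected pieces'' captures only those minors reached by edge contractions (followed by edge deletions); it omits vertex deletion. For an order-$8$ pattern $O$ inside an order-$9$ host $G_i$ you must also test whether $O \subseteq G_i - v$ for each $v \in V(G_i)$, not merely whether $O \subseteq G_i/e$ for each edge $e$. This does not change the final answer here---all five members of $S$ turn out to have order $9$, consistent with Fact~\ref{fac:SHaveSize20} since the three order-$8$ obstructions have size at least $22$---but as written your procedure would not by itself certify that no order-$8$ obstruction occurs as a minor of some $G_i$. Either enlarge the enumeration to allow branch sets covering a proper subset of $V(G_i)$, or argue separately (e.g.\ by observing that the order-$8$ obstructions are IL, hence cannot be minors of the nIL graphs $G_i$) to dispose of that case.
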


\begin{fact}\label{fac:SHaveSize20}
    All members of $S$ have size 20.
\end{fact}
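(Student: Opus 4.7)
The plan is to verify the statement by direct inspection. Lemma \ref{lem:mnILnonTorObsts} has already enumerated $S$ as a specific list of five graphs, each explicitly depicted in the preceding figure. Consequently, the fact reduces to a finite computation: count the edges of each of the five graphs on 9 vertices and confirm that each total equals 20.

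My approach would be to load the five obstructions from the accompanying Mathematica file \verb|ImportantGraphs.nb| and apply \verb|EdgeCount| to each; this gives an immediate, machine-checkable verification. As a cross-check, one can also count edges by hand directly from the figure, since each graph has only 9 vertices and the incidence pattern is small enough to tally reliably from the picture.

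There is no real obstacle here; the combinatorial work has already been absorbed into the preceding lemmata. The one place an error could plausibly sneak in is the enumeration itself in Lemma \ref{lem:mnILnonTorObsts}, but that enumeration was independently produced by extracting the toroidal obstructions sitting inside the four non-toroidal maxnIL graphs of Lemma \ref{lem:mnILnonTors}, and the relevant sizes are fixed by how many edges must be deleted from each parent graph before an obstruction first appears. The present fact is therefore essentially a bookkeeping statement about the output of that search, and its proof amounts to a single table of five edge counts, all of which come out to 20.
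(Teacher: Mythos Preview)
Your proposal is correct and matches the paper's own justification essentially verbatim: the paper also treats this as a direct edge count, inviting the reader to tally edges from the figure and pointing to a cell in the accompanying Mathematica file for a machine check. There is nothing to add.
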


The reader may verify the above fact by counting the edges in each graph in the above figure; however for convenience we also include a cell in \verb|Sec_2.2.2| to accomplish this. 

We also include a consequence of \ref{lem:mnILnonTors}.

\begin{cor}\label{cor:nonTorIFFsize20subg}
     If a nIL graph $G$ of order 9 is non-toroidal, then some member of $S$ must be a subgraph of $G$.
\end{cor}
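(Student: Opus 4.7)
The plan is to use Fact \ref{lem:MaxlSubgraph} to embed $G$ in one of the four graphs of Figure \ref{fig:mnILNT}, produce a candidate obstruction via Theorem \ref{thm:ToroidalObstructions}, and then use Fact \ref{fac:SHaveSize20} to rule out the order-8 possibilities and thereby force the obstruction to sit inside $G$ as a subgraph rather than merely as a minor.

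First, I would apply Fact \ref{lem:MaxlSubgraph} to realize $G$ as a subgraph of some maxnIL graph $M$ of order 9. Toroidality is preserved under taking subgraphs (any torus-embedding of a supergraph restricts to the subgraph), so since $G$ is non-toroidal, $M$ is non-toroidal as well; by Lemma \ref{lem:mnILnonTors}, $M$ must be one of the four graphs of Figure \ref{fig:mnILNT}. Since $G$ itself is non-toroidal, Theorem \ref{thm:ToroidalObstructions} furnishes an obstruction $O$ appearing as a minor of $G$. The smallest obstruction has order 8 and $|V(G)|=9$, so $|V(O)| \in \{8,9\}$. Because $G \subseteq M$, the minor $O$ of $G$ is also a minor of $M$, making $O$ an obstruction contained in a non-toroidal maxnIL graph of order 9; i.e., $O \in S$.

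The crucial step is to upgrade $O$ from being merely a minor of $G$ to being a subgraph of $G$. For this I would invoke Fact \ref{fac:SHaveSize20}: every member of $S$ has size 20. The three order-8 obstructions $K_8-K_3$, $K_8-(2K_2\cup P_3)$, and $K_8-K_{2,3}$ have sizes 25, 24, and 22 respectively, none of which equal 20; therefore $O$ cannot be of order 8, so $|V(O)|=9$. An order-9 minor of an order-9 graph cannot arise from any edge contractions or vertex deletions, so $O$ is in fact a subgraph of $G$, which is the desired conclusion.

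The main obstacle lies in the last step: absent Fact \ref{fac:SHaveSize20}, an order-8 obstruction of $G$ might arise from an edge contraction rather than from a subgraph, and this would fail to yield a subgraph witness of non-toroidality. The size-20 constraint is precisely what makes ``minor'' collapse to ``subgraph'' on 9 vertices, by forcing $|V(O)|=9$. A minor subtlety is the interpretation of ``contained in'' in the definition of $S$; under either the minor or the subgraph reading, the argument adapts with the same key use of Fact \ref{fac:SHaveSize20} to pin down the order of $O$.
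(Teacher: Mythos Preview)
Your argument is correct and matches the paper's intended (implicit) reasoning: embed $G$ in a maxnIL graph $M$ of order 9 via Fact~\ref{lem:MaxlSubgraph}, note $M$ must be one of the four non-toroidal ones by Lemma~\ref{lem:mnILnonTors}, pull an obstruction $O$ out of $G$ via Theorem~\ref{thm:ToroidalObstructions}, observe $O\in S$ since $O$ is a minor of $M$, and then use Fact~\ref{fac:SHaveSize20} together with the stated sizes $25,24,22$ of the order-$8$ obstructions to force $|V(O)|=9$, whence $O\subseteq G$. The paper records this only as ``a consequence of Lemma~\ref{lem:mnILnonTors}'' without spelling out the steps, but your write-up is exactly the argument the surrounding lemmata are set up to support, and your remark about the minor-versus-subgraph reading of ``contained in'' is a fair observation that does not affect the conclusion.
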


\begin{lemma}\label{lem:subgraphOfNandNotM}
    Let $M$ denote the set of toroidal, maxnIL graphs of order 9, $N$ denote the set of non-toroidal maxnIL graphs of order 9, and let $G$ be an $MTN$ graph of order 9. The following are equivalent:
    \begin{itemize}
        \item[i.] $G$ is not maxnIL; i.e. $G\in \mM$.
        \item[ii.] $G$ is a proper subgraph of some member of $N$
        \item[iii.] $G$ is not a subgraph of any member of $M$.
    \end{itemize}
\end{lemma}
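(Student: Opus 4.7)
The plan is to prove the three equivalences by separately establishing (i)$\Leftrightarrow$(ii) and (i)$\Leftrightarrow$(iii). The central observation, which powers every direction, is that the families of toroidal graphs and of nIL graphs are both closed under taking subgraphs. Consequently, if $G \subsetneq H$ for some toroidal nIL graph $H$, then for any $e \in E(H) \setminus E(G)$ the graph $G+e$ sits inside $H$ and therefore is itself toroidal and nIL, i.e.\ TN. Since $G$ is MTN, this situation is forbidden, and that tension drives the forward directions.

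For (i)$\Rightarrow$(ii), I would start by using that $G$ is nIL to invoke Fact~\ref{lem:MaxlSubgraph}: $G$ is a subgraph of some maxnIL graph $H$ of order $9$, so $H \in M \cup N$. The assumption $G \notin \{\text{maxnIL}\}$ forces $G \subsetneq H$. If $H$ belonged to $M$ (toroidal), the common observation above would produce a TN graph $G+e$ strictly containing $G$, contradicting MTN-ness; hence $H \in N$, which is (ii). An essentially identical argument handles (i)$\Rightarrow$(iii): given $G \subseteq H \in M$, the case $G=H$ would make $G$ itself maxnIL (contradicting (i)), while $G \subsetneq H$ again produces a TN supergraph $G+e \subseteq H$, contradicting MTN-ness.

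The reverse directions are short. For (ii)$\Rightarrow$(i), if $G \subsetneq H \in N$, pick $e \in E(H) \setminus E(G)$; then $G+e \subseteq H$ is nIL since nIL is subgraph-closed, so $G$ is not maxnIL. For (iii)$\Rightarrow$(i), I would argue by contrapositive: if $G$ were maxnIL, then, being toroidal (from MTN), $G$ would itself lie in $M$, and $G \subseteq G \in M$ would violate (iii). I do not anticipate any real obstacle; the main subtlety is organizational, namely isolating the one-line sub-lemma (\emph{$G$ MTN plus $G \subsetneq H$ toroidal nIL yields a contradiction}) and then reusing it in both forward directions, and remembering in (i)$\Rightarrow$(iii) to treat the $G=H$ case separately from the strict-subgraph case.
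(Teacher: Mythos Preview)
Your proposal is correct and uses the same ingredients as the paper: subgraph-closure of TN, Fact~\ref{lem:MaxlSubgraph} to place $G$ inside some maxnIL graph, and the observation that $G$ MTN together with $G\subsetneq H$ for $H$ TN yields a contradiction. The only difference is organizational: the paper runs a single cycle (i)$\to$(iii)$\to$(ii)$\to$(i), whereas you prove the two bi-implications (i)$\Leftrightarrow$(ii) and (i)$\Leftrightarrow$(iii) separately, making your (i)$\Rightarrow$(ii) effectively the paper's (i)$\to$(iii)$\to$(ii) compressed into one step and your (iii)$\Rightarrow$(i) a one-line contrapositive the paper does not need.
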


\begin{proof}
    ($i\rightarrow iii)$ Suppose $G$ is not maxnIL, and suppose toward contradiction that $G\subseteq H$ for some $H\in M$. Since $H$ is maxnIL and $G$ is not, $G\neq H$, so $G\subset H$; i.e. there exists some edge $e$ such that $G+e\subseteq H$. Because $G$ is $MTN$, the graph $G+e$ is either IL or non-toroidal, a contradiction since $H$ is TN. So $G\not \subseteq H$ for any $H\in M$.

    ($iii\rightarrow ii$) Suppose $G$ is not a subgraph of any member of $H$. By \ref{lem:MaxlSubgraph}, $G$ must be a subgraph of some maxnIL graph of order 9 because $G$ is nIL. Thus it must be a subgraph of some member of $N$, and because $G$ is toroidal, it must be a proper subgraph.

    ($ii \rightarrow i$) Suppose $G$ is a proper subgraph of some $H\in N$; i.e. there exists some edge $e$ such that $G+e\subseteq H$. Then $G+e$ is nIL, so $G$ is not maxnIL.
\end{proof}

\begin{lemma}\label{lem:MTNConnected}
    No MTN graph of order 9 is disconnected.
\end{lemma}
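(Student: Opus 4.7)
The plan is to argue by contradiction: suppose $G$ is a disconnected MTN graph of order $9$, and exhibit an edge $e \notin E(G)$ such that $G + e$ is still TN, contradicting the maximality of $G$. Write the components of $G$ as $C_1, \ldots, C_k$ with $k \geq 2$, fix two distinct components $C_1$ and $C_2$, and let the candidate edge be $e = uv$ with $u \in V(C_1)$ and $v \in V(C_2)$. In particular $e$ joins different components of $G$, so $e$ is a bridge in $G + e$, which will be the key combinatorial feature used throughout.

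For toroidality of $G + e$, I would fix a toroidal embedding of $G$ and first look only at the embedded image of $C_1$. Since $C_2$ is connected and disjoint from $C_1$ in the torus, its image lies inside a single open face $F$ of the sub-embedding given by $C_1$. Pick $v$ to be any vertex of $C_2$, and choose $u$ to be any vertex of $C_1$ lying on the topological boundary of $F$ (such a $u$ exists because $C_1$ is nonempty; if $C_1$ is a single isolated vertex, that vertex is the boundary of the unique face). Draw a simple arc from $u$ to $v$ whose interior lies in $F$ and which meets the embedded $C_2$ only at $v$. Together with the original embedding of $G$, this gives a toroidal embedding of $G + e$.

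For nIL-ness of $G + e$, observe that since $u$ and $v$ are in different components of $G$, the edge $e$ is a bridge in $G + e$ and therefore lies on no cycle. Hence the cycle set of $G + e$ is identical to that of $G$. Start from any linkless spatial embedding of $G$ in $\mathbb{R}^3$ and extend it by drawing $e$ as an arbitrary tame arc from $u$ to $v$; the set of pairs of disjoint cycles is unchanged, so no new nontrivial link can appear, and the extended embedding witnesses that $G + e$ is nIL. Combining this with the previous step, $G + e$ is TN, which contradicts $G$ being MTN.

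The main obstacle is the small topological detail in the toroidal step: ensuring the arc from $u$ to $v$ can be drawn inside $F$ terminating cleanly at $v$ without crossing any edge of $C_2$ incident to $v$. This is handled by leaving $u$ in a direction pointing into $F$ (available since $u \in \partial F$), using path-connectivity of $F$, and approaching $v$ along one of the finitely many sector directions at $v$ not already occupied by an edge of $C_2$. These are standard facts about graph embeddings on surfaces and require no real computation. It is worth noting that the argument never uses that the order is exactly $9$, so the same reasoning shows that no MTN graph of any order is disconnected; the paper only needs the order-$9$ case.
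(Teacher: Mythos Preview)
Your bridge argument for nIL-ness is clean and correct. The toroidality step, however, has a genuine gap. You choose $v\in C_2$ arbitrarily and then assert that an arc from $u\in\partial F$ to $v$ can be drawn inside $F$ meeting $C_2$ only at $v$; but path-connectivity of $F$ is not enough, because the arc must actually lie in $F\setminus(C_2\cup C_3\cup\cdots\cup C_k)$, which need not be connected. Concretely, if $F$ happens to be a disk and $C_2$ is a wheel embedded in $F$ with $v$ at the hub, every arc from $\partial F$ to $v$ crosses the rim of $C_2$; and if $k\ge 3$, some $C_j$ sitting in $F$ may separate $u$ from all of $C_2$. Your final paragraph only treats the local picture near $u$ and $v$, not this global separation.

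The repair is to choose the endpoints after locating the right face: show that some face $F_0$ of the \emph{full} embedding of $G$ has boundary meeting two distinct components, and take $u,v$ there. One way to see this: let $R$ be the union of $C_1$ with every face of $G$ whose boundary lies entirely in $C_1$; then $R$ is closed, and were it also open it would be the whole torus, contradicting $C_2\cap R=\varnothing$. Hence some face of $G$ incident to $C_1$ also carries a vertex of another component on its boundary, and $e$ can be drawn inside that face. With this adjustment your argument goes through and, as you observe, never uses that the order is $9$.

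This is a genuinely different route from the paper's. The paper does not attempt to thread $e$ through the existing embedding; instead it uses the order-$9$ hypothesis to guarantee that one part $A$ has at most four vertices, hence is planar, and then rebuilds a toroidal embedding from scratch by inserting a planar drawing of $A$ into a disk inside a face of a toroidal embedding of the other part $B$. Your approach buys generality (any order); the paper's is more hands-on but tied to small order.
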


\begin{proof}
    Let $G$ be a disconnected, TN graph of order 9. Suppose $G=A\sqcup B$ where $A\subseteq K_n$ and $B\subseteq K_{9-n}$ given WLOG $1\leq n \leq 4$; note that $A$ and $B$ need not be connected themselves. Since the family of TN graphs is minor-closed, $A$ and $B$ are both TN; moreover $A$ is planar since it has fewer than 5 vertices. We will show that we can always pick a vertex $a\in A$, $b\in B$ such that $G+\{a,b\}$ is TN.

    Let $E_A$ be a planar embedding of $A$ and pick a vertex $a\in A$ so that $a$ is not enclosed by a cycle in the plane. Let $E_B$ be a toroidal embedding of $B$ and $b\in V(B)$, and let $e=\{a,b\}$. In $E_B$, the vertex $b$ will be incident to some face, i.e. a region of $T^2-E_B$ which is homeomorphic to $\bbR^2$. We can now construct a toroidal embedding of $G+e$ by embedding $E_A$ into this region and connecting $a$ to $b$ with the edge $e$, as in Figure \ref{fig:GplusEisTor}. Thus, $G+e$ is toroidal. 

    \begin{figure}[ht]
        \centering
        \includegraphics[width=0.5\linewidth]{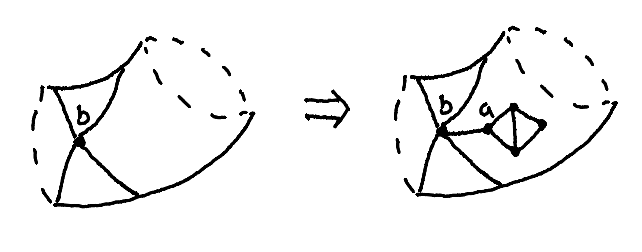}
        \caption{A toroidal embedding of $G+e$.}
        \label{fig:GplusEisTor}
    \end{figure}
    
    Since $A$ and $B$ are TN, they have linkless embeddings in $\bbR^3$, which we will now call $E_A$ and $E_B$. Thus we can create an embedding of $G+e$ by connecting $E_A$ and $E_B$ with, as in Figure \ref{fig:GplusEisNil}. If $G+e$ were IL, then this embedding would contain a link; however $e$ is not part of a cycle since it is a cut-edge for some component of $G+e$. This implies that $A$ or $B$ contains a link, contradicting that $E_A$ and $E_B$ are linkless. Thus $G+e$ is nIL.

    \begin{figure}[ht]
        \centering
        \includegraphics[width=0.4\linewidth]{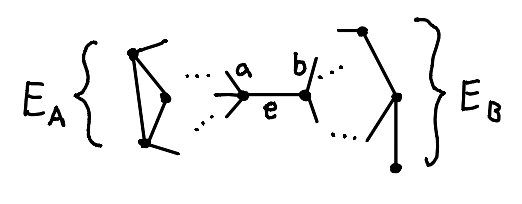}
        \caption{A linkless embedding of $G+e$.}
        \label{fig:GplusEisNil}
    \end{figure}
\end{proof}

\begin{lemma}\label{lem:LB1}
    No member of $\mM$ has size $\leq 18$. 
\end{lemma}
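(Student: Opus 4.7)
The plan is a short contradiction argument that leverages Fact~\ref{fac:SHaveSize20} (every member of $S$ has size 20) together with the structural characterization of $\mM$ provided by Lemma~\ref{lem:subgraphOfNandNotM} and the subgraph criterion from Corollary~\ref{cor:nonTorIFFsize20subg}. Suppose toward contradiction that $G \in \mM$ has size at most $18$. First I would invoke the implication $(i \rightarrow ii)$ of Lemma~\ref{lem:subgraphOfNandNotM} to realize $G$ as a proper subgraph of some non-toroidal maxnIL graph $H \in N$. Since $G$ is a proper subgraph of $H$, there exists an edge $e$ in the complement of $G$ with $G + e \subseteq H$.

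Next I would extract two facts about the graph $G + e$. Because $H$ is maxnIL, every subgraph of $H$ is nIL, so in particular $G + e$ is nIL. On the other hand, $G + e$ has at most $18 + 1 = 19$ edges. By Fact~\ref{fac:SHaveSize20}, each member of $S$ has exactly 20 edges, so $G + e$ simply cannot contain any member of $S$ as a subgraph by a trivial edge count. Applying the contrapositive of Corollary~\ref{cor:nonTorIFFsize20subg} to the nIL graph $G + e$, we conclude $G + e$ is toroidal.

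This means $G + e$ is both toroidal and nIL, i.e., TN, which contradicts the maximality of $G$ among TN graphs. Therefore no graph in $\mM$ can have size at most $18$. The hard part of this result is really just realizing that the earlier lemmata have already done all the work: Corollary~\ref{cor:nonTorIFFsize20subg} turns non-toroidality into the existence of a 20-edge subgraph, and Fact~\ref{fac:SHaveSize20} converts that subgraph condition into a pure size bound, so the argument reduces to a one-line counting observation. The only subtlety is in correctly reading off from Lemma~\ref{lem:subgraphOfNandNotM} that a graph in $\mM$ sits \emph{properly} inside some $H \in N$ (rather than inside a toroidal maxnIL graph), which is precisely what guarantees the existence of the edge $e$ driving the contradiction.
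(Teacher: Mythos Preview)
Your proof is correct and uses essentially the same core idea as the paper: the size bound from Fact~\ref{fac:SHaveSize20} combined with Corollary~\ref{cor:nonTorIFFsize20subg} forces any nIL graph $G+e$ of size at most $19$ to be toroidal. The only cosmetic difference is in how the contradiction is packaged: the paper shows that for \emph{every} edge $e$ the graph $G+e$ must be IL (hence $G$ is maxnIL, contradicting $G\in\mM$), whereas you invoke Lemma~\ref{lem:subgraphOfNandNotM} to produce one specific edge $e$ with $G+e$ nIL and then derive that $G+e$ is TN, contradicting maximality directly. Both routes are short and rest on the same edge-count observation.
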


\begin{proof}
    Let $G$ be an MTN graph of order 9, and suppose that $G$ has size $\leq 18$. Now suppose toward contradiction that $G+e$ is nIL for some edge $e$. Then $G+e$ must be non-toroidal, as $G$ is MTN, and by Corollary \ref{cor:nonTorIFFsize20subg} must contain a member of $S$ as a subgraph. But $G+e$ has size $\leq 19$, whereas all members of $S$ have size 20, yielding a contradiction. So $G+e$ must be IL, implying that $G$ is maxnIL, and in particular that $G\not\in \mM$.
\end{proof}

Increasing lower bound on graph size drastically reduces the number of candidate graphs for $\mM$ which we will have to search through. In the following lemma we again use computation to restrict the search space again; this will be the final restriction.

\begin{lemma}\label{lem:LB2}
    No member of $\mM$ has size 19.
\end{lemma}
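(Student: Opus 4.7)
The plan is first to drastically reduce the search space using the lemmata already in hand, then to finish by a small, well-structured computer check. Suppose toward contradiction that $G \in \mM$ has $|E(G)| = 19$. By Lemma~\ref{lem:subgraphOfNandNotM}, $G$ is a proper subgraph of some $H \in N$, so we may pick an edge $e \in E(H) \setminus E(G)$. Then $G + e \subseteq H$, so $G + e$ is nIL; yet since $G$ is MTN, $G + e$ cannot be TN, and being nIL it must therefore be non-toroidal. Applying Corollary~\ref{cor:nonTorIFFsize20subg}, $G + e$ contains some member of $S$ as a subgraph. By Fact~\ref{fac:SHaveSize20}, every element of $S$ has size 20, matching $|E(G + e)| = 20$, so the containment is an equality up to isomorphism: $G + e$ is a member of $S$.

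Consequently, every candidate size-19 element of $\mM$ is of the form $s - f$ for some $s \in S$ and some edge $f \in E(s)$. With $|S| = 5$ and each such $s$ having 20 edges, this leaves at most $100$ candidates, typically far fewer once the automorphism groups of the five graphs in $S$ are taken into account. For each candidate $G$, it then suffices to exhibit an edge $e' \notin E(G)$ in $K_9$ such that $G + e'$ is TN; this witnesses that $G$ is not MTN and contradicts $G \in \mM$. Equivalently (using Lemma~\ref{lem:subgraphOfNandNotM} again), it suffices to check that $G$ is a subgraph of some toroidal maxnIL graph of order 9, since then some extension $G + e'$ sits inside a TN graph and is itself TN.

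The two checks required are mechanical. Toroidality of a graph of order 9 is decided by Theorem~\ref{thm:ToroidalObstructions}, i.e.\ by testing whether any graph in $B_8 \cup B_9$ arises as a minor; the nIL property is decided by testing whether any member of the Petersen family arises as a minor. Both tests are implemented in the accompanying Mathematica file, in the same spirit as the computations behind Lemmas~\ref{lem:mnILnonTors} and \ref{lem:mnILnonTorObsts}. The main obstacle is purely organizational: enumerating the candidates $s - f$ up to isomorphism so no case is duplicated or missed, and then, for each candidate $G$, iterating over the $\binom{9}{2} - 19 = 17$ edges $e'$ in the complement of $G$ in $K_9$ until a TN extension $G + e'$ is found. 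The anticipated outcome of the search is that every such candidate does admit a TN extension, which rules out every size-19 graph from $\mM$ and proves the lemma.
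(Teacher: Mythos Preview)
Your proposal is correct and follows essentially the same route as the paper: reduce to showing that any size-19 candidate must be of the form $s-f$ for some $s\in S$ and $f\in E(s)$, then dispatch the at most $5\times 20$ candidates by a computer search for a TN extension $G+e'$. The only cosmetic difference is that the paper obtains the edge $e$ with $G+e$ nIL directly from ``$G$ is not maxnIL,'' whereas you route through Lemma~\ref{lem:subgraphOfNandNotM}; the computation you describe is exactly the paper's \texttt{Lem2Point11}.
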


\begin{proof}
    Let $G\in \mM$ and suppose that $G$ has order 19. Since $G\in \mM$, there must be at least one edge $e$ such that $G+e$ is nIL, otherwise $G$ would be maxnIL. By assumption we have that $G+e$ is non-toroidal and therefore that some member of $S$ is a subgraph of $G+e$; but since it has size 20, this gives $G+e\in S$. Thus $G=H-e$ for some $H\in S$ and some edge $e\in H$. 

    In the attached Mathematica file \verb|Sec_2.2.2|, the function \verb|Lem2Point11| searches through all possible graphs of the form $H-e$ and verifies that there is always an edge $e'$ such that $H-e+e'$ is both toroidal and nIL. Since those were the only candidates for graphs of size 19 in $\mM$, this forms a proof by exhaustion that there are no such graphs in $\mM$.
\end{proof}

This handful of lemmata allows us now to put forth a new tool in our search for MTN graphs. This tool is a recursive algorithm, which we will call \verb|MTNSearch|, displayed as Algorithm \ref{alg:MTNSearch} below, which takes as input a graph $G$ and outputs a specific list of subgraphs of $G$. The algorithm will allow us to narrow our search space significantly, as shown in Theorem \ref{thm:MTNSearchWorks}.

Algorithm \ref{alg:MTNSearch} also requires us to be able to tell algorithmically when a graph is nIL; for this we use an algorithm put forth in \cite{naimi2020intrinsicallyknottedlinkedgraphs}.

\begin{algorithm}[ht]\caption{ 
 MTNSearch(G)}\label{alg:MTNSearch}
\begin{algorithmic}
    \Require{G is a nIL graph of order 9}
    \If{Size(G)$>$19 \textbf{and} G is connected \textbf{and} G is not a subgraph of a toroidal maxnIL graph}
    \If{G is not toroidal}
    \State \Return{$\bigcup_{e\in E(G)}$MTNSearch($G-e$)}
    \Else \State \Return{\{G\}}
    \EndIf
    \Else \State \Return \{\}
    \EndIf
\end{algorithmic}
\end{algorithm}

\begin{thm}\label{thm:MTNSearchWorks}
    If $H\in \mM$ and $H\subseteq G$, then $H\in MTNSearch(G)$.
\end{thm}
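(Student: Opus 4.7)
The plan is to induct on $k = |E(G)| - |E(H)|$. The precondition of MTNSearch (order $9$ and nIL) is preserved under edge deletion, so every recursive call remains well-defined.

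For the base case $k = 0$ we have $G = H$, and I need to verify that MTNSearch takes the second branch and returns $\{H\}$. The three clauses of the outer conditional read: (i) $|E(H)| > 19$, which follows from Lemmas \ref{lem:LB1} and \ref{lem:LB2} (together they force $|E(H)| \geq 20$); (ii) $H$ is connected, which is Lemma \ref{lem:MTNConnected}; and (iii) $H$ is not a subgraph of any toroidal maxnIL graph of order $9$, which is the (i)$\Leftrightarrow$(iii) equivalence of Lemma \ref{lem:subgraphOfNandNotM}. Finally, $H$ is toroidal by the definition of MTN, so the algorithm returns $\{H\}$ as required.

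For the inductive step $k \geq 1$ we have $H \subsetneq G$. First I verify the three outer conditions for $G$: the size bound holds since $|E(G)| \geq |E(H)| + 1 \geq 20$; $G$ is connected because it contains $H$ as a spanning connected subgraph; and if $G$ were contained in some toroidal maxnIL graph $T$, then so would $H$, contradicting Lemma \ref{lem:subgraphOfNandNotM} applied to $H$. Next I argue that $G$ is non-toroidal: otherwise $G$ would be TN (since $G$ is nIL by hypothesis), and for any $e \in E(G) \setminus E(H)$ the subgraph $H + e \subseteq G$ would be TN as well (both toroidal and nIL are subgraph-closed), contradicting the MTN-maximality of $H$. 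Therefore MTNSearch recurses and returns $\bigcup_{e \in E(G)} \operatorname{MTNSearch}(G - e)$. Picking any $e \in E(G) \setminus E(H)$, the graph $G - e$ still contains $H$ and has edge-difference $k - 1$, so the inductive hypothesis gives $H \in \operatorname{MTNSearch}(G - e)$, hence in the union.

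The main delicate point is clause (iii) of the outer conditional in the inductive step: the algorithm demands that $G$ itself, not merely $H$, fail to embed into any toroidal maxnIL graph. This is exactly what Lemma \ref{lem:subgraphOfNandNotM} secures, since it promotes the property ``$H$ is not maxnIL'' into the inheritable property ``$H$ is not a subgraph of any toroidal maxnIL graph,'' which then transfers automatically to any supergraph $G$. Every other step is either immediate monotonicity under edge addition or a direct appeal to one of the preparatory lemmas, so no further obstacle is anticipated.
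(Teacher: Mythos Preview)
Your proof is correct and follows essentially the same route as the paper's: induction with base case $G=H$ verified via Lemmas \ref{lem:LB1}, \ref{lem:LB2}, \ref{lem:MTNConnected}, \ref{lem:subgraphOfNandNotM}, and an inductive step that first checks the three outer conditions for $G$, then forces $G$ to be non-toroidal via the maximality of $H$, and finally passes to some $G-e$ with $e\notin E(H)$. Making the induction parameter $k=|E(G)|-|E(H)|$ explicit is a slight clarification over the paper's more informal ``assume the result holds for $G-e$,'' but the logical content is identical.
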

\begin{proof} Let $H\in \mM$. We proceed by induction. 

For our base case, we take $G=H$. Note that $H$ has more than 19 edges by \ref{lem:LB1} and \ref{lem:LB2}, is connected by \ref{lem:MTNConnected}, and is not a subgraph of any toroidal maxnIL graph by \ref{lem:subgraphOfNandNotM}. Moreover it is toroidal (and nIL) by assumption; thus MTNSearch($H$) returns $\{H\}$.

For our inductive case, assume that the result holds for $G-e$ for each $e\in G$, and suppose that $H\subset G$. Then $G$ has more than 19 edges since $H$ does, and $G$ is connected since $H$ is connected. Moreover, $G$ is not a subgraph of any toroidal maxnIL graph, as this would imply that $H$ is as well, impossible by lemma. Finally, since $H\subset G$, we have $H+e\subseteq G$ for some $e\not\in H$. $G$ is nIL by assumption, implying that $H+e$ is nIL and thus non-toroidal since $H$ is MTN. Thus $G$ is also non-toroidal, meaning MTNSearch($G$) returns the set $\bigcup_{e\in E(G)}$MTNSearch($G-e$). Because $H \subset G$, we have that $H\subseteq G-e$ for some edge $e\in G$; thus MTNSearch($G-e$) contains $H$ by inductive hypothesis, and so $H\in$ MTNSearch($G$).
\end{proof}

\begin{note}\label{note:unionMTNSearch}
    The corollary to this is clear: since every graph in $\mM$ must be a subgraph of some non-toroidal maxnIL graph, then running \verb|MTNSearch| on each of the non-toroidal maxnIL graphs and taking the union yields a set of which $\mM$ is a subset. From there, it will be easy to brute-force check whether each of the graphs in this set is MTN, and we will be able to find $\mM$.
\end{note}

We now turn to the second goal, which is a way to prove that an embedding of an MTN graph is linkless.

\subsection{Proving Linklessness}

In Section 1.1, we defined the slope of a knot in the torus in terms of its crossings over the outer longitude and some meridian. We will now connect this idea to the slope of a cycle in an embedding. 

First, note that we will be dealing primarily with torus diagrams, in which the top and bottom sides of a square are glued together, then the left and right sides. By convention, the top-bottom boundary runs along the outer meridian of the torus and the left-right boundary forms a meridian. Thus the slope of a cycle in a torus diagram can be determined by taking the algebraic sums of the crossings of these boundaries. Since the orientation of the longitude and meridian are arbitrary, we will choose the orientation so that a crossing off the top of the diagram back around to the bottom is right-handed (positive) and the opposite direction is left-handed (negative). A crossing off the right boundary of the diagram around to the left is also right-handed, and left-to-right is left-handed.

We will assume that an edge in a cycle will cross a boundary at most once, so only edges which cross the boundary contribute to the slope. Thus, for a toroidal embedding $E$, an algorithm knowing which edges cross each boundary, and on which side of the boundary each endpoint lies, would be able to find the slope of any cycle in $E$ given to it.

In Algorithm \ref{alg:FindLinks}, UpList is the list of edges which cross the upper boundary. Elements of UpList are ordered tuples $(u,v)$, with $u$ situated below the boundary and $v$ situated above the boundary, and correspondingly for RightList. We will assume the vertices of a graph of order $n$ may be represented as the integers 1 through $n$, and that a cycle is represented as an ordered list of vertices (integers).

\begin{algorithm}[ht]\caption{FindLinksInTorusEmbedding(G,UpList, RightList)}\label{alg:FindLinks}
\begin{algorithmic}
    \Require{$G$ a toroidal graph with vertices $\{1...n\}$, UpList and RightList as above}
    \State Let LinkList = \{\}
    \State Let CycSlopeList = \{\};
    \State Let $M$ be an $n\times n$ matrix with each entry being the ordered pair $(0,0)$;
    \For{each edge $(j,k)\in$ UpEdges}
    \State $M_{j,k}$ += (1,0), $M_{k,j}$ += ($-1$,0);
    \EndFor
    \For{each edge $(j,k)\in$ RightEdges}
    \State $M_{j,k}$ += (0,1), $M_{k,j}$ += (0,$-1$)
    \EndFor
    \For{each cycle $C={v_1,...,v_k}$ in $G$ where $3\leq k \leq n-3$}
    \State Let $(P,Q)=[\sum_{i=1}^{k-1} M_{v_i,v_{i+1}}]+M_{v_k,v_1}$;
    \Comment{With addition taken to be component-wise}
    \If{$P\neq 0$ and $Q\neq 0$}
    \State Append $(\frac{P}{Q},C)$ to CycSlopeList;
    \EndIf
    \EndFor
    \State For each distinct slope $k$ in CycSlopeList, create a list $S_k$ of every cycle with slope $k$;
    \For{each $S_k$}
    \For{each pair of cycles $C_1,C_2$ in $S_k$}
    \If{$C_1$ and $C_2$ are disjoint}
    \State Append $(C_1,C_2)$ to LinkList;
    \EndIf
    \EndFor
    \EndFor \\
    \Return LinkList
\end{algorithmic}
\end{algorithm}

\begin{thm}
    Let $E$ be a toroidal embedding of a graph $G$. Then $E$ is linkless if and only if the list returned by Algorithm \ref{alg:FindLinks} is empty.
\end{thm}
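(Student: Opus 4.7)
The plan is to show this theorem in two stages: first, verify that the matrix $M$ constructed in Algorithm \ref{alg:FindLinks} correctly encodes the signed boundary crossings needed to compute the slope of Definition \ref{def:slope}; second, invoke Theorems \ref{thm:cycleSameSlope} and \ref{thm:linkslopepair} to translate a statement about linking into a statement about matching slopes, which is precisely what the grouping step of the algorithm detects.

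For the first stage, I would unpack the construction of $M$. Under the orientation conventions fixed just before the algorithm (an edge exiting the top boundary is a right-handed crossing of the longitude, and exiting the right boundary is a right-handed crossing of the meridian), each $(u,v) \in$ UpList contributes $+1$ to the first coordinate of $M_{u,v}$ and $-1$ to the first coordinate of $M_{v,u}$, which is exactly the signed contribution the directed edge makes when traversed in either direction. The same holds for RightList in the second coordinate. Under the stated assumption that no edge crosses a given boundary more than once, walking around a cycle $C = v_1, \dots, v_k, v_1$ and summing $M_{v_i,v_{i+1}}$ yields the pair $(P,Q)$ appearing in Definition \ref{def:slope}, so the cycle is inessential exactly when $(P,Q)=(0,0)$ and has slope $P/Q$ otherwise. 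The stronger test $P \neq 0$ \textbf{and} $Q \neq 0$ inside the loop then isolates precisely those cycles whose slope is neither $0$ nor $\infty$.

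For the second stage, I would run the characterization of linked torus knots forward and backward. Suppose LinkList is nonempty, and let $(C_1,C_2)$ be a pair it contains. By construction, $C_1$ and $C_2$ are disjoint and share a common slope $P/Q$ with $P, Q \neq 0$, so Theorem \ref{thm:linkslopepair} gives that $C_1 \cup C_2$ is a nontrivial link in $E$. Conversely, suppose $E$ contains a pair of cycles $(C_1,C_2)$ that form a nontrivial link. Then $C_1$ and $C_2$ are disjoint, and Theorem \ref{thm:linkslopepair} forces each to have well-defined slope of the form $n/m$ with $m,n \neq 0$, while Theorem \ref{thm:cycleSameSlope} forces these slopes to agree. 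Both cycles thus satisfy the $P,Q \neq 0$ test and land in the same list $S_k$, so the pair is appended to LinkList. A small bookkeeping check — that any two disjoint cycles in a graph on $n$ vertices each have length between $3$ and $n-3$ — confirms that the length bound in the outer \textbf{for} loop misses no candidate pair.

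The main obstacle I expect is not the logical structure, which follows cleanly from the cited theorems, but the careful verification that the sign conventions in the matrix update steps exactly match the conventions in Definition \ref{def:slope}, together with a clean statement of the implicit hypothesis that each edge meets each boundary transversally at most once. Once these bookkeeping details are in place, the equivalence is essentially a direct restatement of Theorem \ref{thm:linkslopepair} combined with Theorem \ref{thm:cycleSameSlope}.
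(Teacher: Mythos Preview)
Your proposal is correct and takes essentially the same approach as the paper: the paper's own proof is the single sentence ``follows from the above argument combined with Theorem \ref{thm:linkslopepair},'' and you have simply unpacked that sentence, verifying that the matrix $M$ computes the slope data of Definition \ref{def:slope} and then applying Theorem \ref{thm:linkslopepair} in both directions. Your separate invocation of Theorem \ref{thm:cycleSameSlope} is harmless but redundant, since the statement of Theorem \ref{thm:linkslopepair} already asserts that the two knots share a common slope $n/m$.
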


The proof of this theorem follows from the above argument combined with Theorem \ref{thm:linkslopepair}.

\section{Results}

\subsection{MTN Graphs of Order 9}

The computation specified in Note \ref{note:unionMTNSearch} is carried out at the end of the attached Mathematica notebook \verb|Sec_2.2.2|; on the author's local machine, computation took roughly two and a half hours. The resulting set $\mM$, the set of all non-maxnIL MTN graphs of order 9, is a set of eleven graphs, whose renderings are given in Figure \ref{fig:ScriptM}. These eleven graphs, combined with the sixteen toroidal maxnIL graphs, form the complete set of MTN graphs of order 9. 

\begin{figure}[ht]
    \centering
    \includegraphics[width=\linewidth]{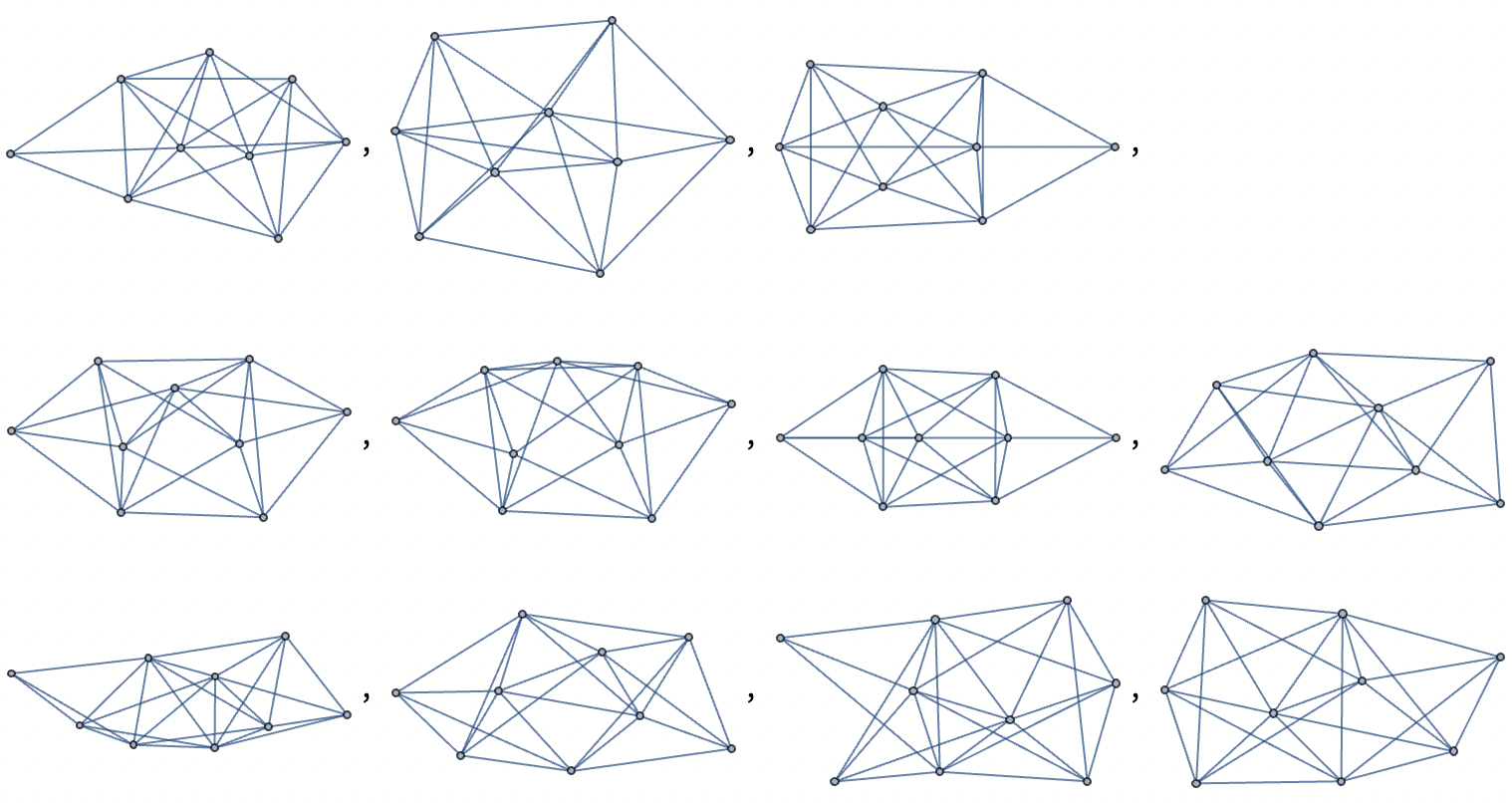}
    \caption{All MTN graphs of order 9 which are not maxnIL.}
    \label{fig:ScriptM}
\end{figure}

\subsection{Embeddings of MTN Graphs}

The embeddings of all twenty-seven MTN graphs of order 9, as well as the six MTN graphs of order 8, the two of order 7, and the one of order 6, were found by hand. The use of the Mathematica tool ``Embeddings of Graphs in a Torus and in a Moebius Strip" \cite{Rytin_2011} developed by Maxim Rytin, and published through the Wolfram Demonstrations Project, was indispensable in this process.

The embeddings themselves are included in the included document \verb|MTNEmbeddings.pdf| and the reader may verify the linklessness of each of them in the Mathematica notebook \verb|MTN_Embeddings.nb|, where the proper inputs to Algorithm \ref{alg:FindLinks} are shown.

This completes the proof that all TN graphs of order 9 and below are LTE.

\subsection{Future Directions}

Based on these results, we conjecture that the result proven for small orders holds for every order; i.e. that every toroidal, nIL graph has a linkless embedding in the torus. Attempting to prove this piecewise for higher orders using the same general approach we have used in this paper is unlikely to be worth the tradeoff, as the number of MTN graphs grows exponentially with order. An approach that may be feasible in general is showing that the set of forbidden minors for TN graphs and for LTE graphs is identical; difficult since the full set of toroidal forbidden minors is not fully known, but not impossible. 

\section{Acknowledgements}

This paper would not have been possible in any sense of the word without the guidance and expertise of Prof. Ramin Naimi. Thank you for giving me this opportunity, and for the patience and contributions that have allowed it to become what it is today.

\vspace{0.5cm}

\hrule

\vspace{0.5cm}

All code for this project can be found at \href{https://github.com/hall-nate/TorNilEmb}{https://github.com/hall-nate/TorNilEmb}.

\printbibliography

@inbook{Rolfsen_torus, place={Providence, RI}, title={2.C: Knots in the Torus}, booktitle={Knots and links}, publisher={American Mathematical Society}, author={Rolfsen, Dale}, year={2004}, pages={17–25}}

@article{Bush_French_Smith_2014, title={Total Linking Numbers of Torus Links and Klein Links}, volume={15}, number={1}, journal={Rose-Hulman Undergraduate Mathematics Journal}, author={Bush, Michael A and French, Katelyn R and Smith, Joseph RH}, year={2014}}

@misc{Rytin_2011, title={Wolfram Demonstrations Project}, url={https://demonstrations.wolfram.com/EmbeddingsOfGraphsInATorusAndInAMoebiusStrip/}, journal={Embeddings of Graphs in a Torus and in a Moebius Strip}, publisher={Wolfram}, author={Rytin, Maxim}, year={2011}, month={3}}

@misc{naimi2022complement,
      title={The Complement Problem for Linklessly Embeddable Graphs (Ancillary)}, 
      author={Ramin Naimi and Ryan Odeneal and Andrei Pavelescu and Elena Pavelescu},
      year={2022},
      eprint={2108.12946},
      howpublished = "\url{https://arxiv.org/src/2108.12946v3/anc/MaxnIL_graphs_of_orders_up_to_11.pdf}",
      archivePrefix={arXiv},
      primaryClass={math.GT}
}

@article{Myrvold_Woodcock_2018, title={A large set of torus obstructions and how they were discovered}, volume={25}, DOI={10.37236/3797}, number={1}, journal={The Electronic Journal of Combinatorics}, author={Myrvold, Wendy and Woodcock, Jennifer}, year={2018}}

@misc{Bargabbiati_LowranceAdam, title={Linking number of torus links}, url={https://math.stackexchange.com/questions/4009795/linking-number-of-torus-links}, journal={Mathematics Stack Exchange}, author={Bargabbiati and Adam Lowrance}, year={2021}, month={02}}

@misc{naimi2020intrinsicallyknottedlinkedgraphs,
      title={On intrinsically knotted and linked graphs}, 
      author={Ramin Naimi},
      year={2020},
      eprint={2006.07342},
      archivePrefix={arXiv},
      primaryClass={math.GT},
      url={https://arxiv.org/abs/2006.07342}, 
}

\end{document}